\newcommand{\imm}{\looparrowright}
\newcommand{\be}{\begin{enumerate}}
\newcommand{\ee}{\end{enumerate}}
\newcommand{\R}{\mathbb{R}}
\newcommand{\Z}{\mathbb{Z}}
\newcommand{\co}{\colon\thinspace}
\theoremstyle{plain}
\newtheorem{thm}{Theorem}[section]
\newtheorem{lemma}[thm]{Lemma}
\newtheorem{prop}[thm]{Proposition}
\newtheorem{cor}[thm]{Corollary}
\theoremstyle{definition}
\newtheorem{exam}[thm]{Example}
\begin{document}

\title[On realizing homology classes]{On realizing homology classes by maps of restricted complexity}

\author{Mark Grant}
\author{Andr\'{a}s Sz\H{u}cs}
\address{School of Mathematical Sciences, The University of Nottingham,
University Park, Nottingham, NG7 2RD, UK}
\address{E\"{o}tv\"{o}s Lor\'{a}nd University, P\'{a}zm\'{a}ny P\'{e}ter s\'{e}t\'{a}ny 1/C, 3-206, 1117 Budapest, Hungary}
\email{mark.grant@nottingham.ac.uk}
\email{szucs@math.elte.hu}

\keywords{Steenrod's problem, immersions, Bockstein operators, Steenrod squares, classifying spaces for singular maps}
\subjclass[2010]{57R19 (Primary); 57R42, 57R45, 57R95, 55N10 (Secondary)}

\maketitle
\begin{abstract} We show that in every codimension greater than one there exists a mod $2$ homology class in some closed manifold (of sufficiently high dimension) which cannot be realized by an immersion of closed manifolds. The proof gives explicit obstructions (in terms of cohomology operations) for realizability of mod $2$ homology classes by immersions. We also prove the corresponding result in which the word `immersion' is replaced by `map with some restricted set of multi-singularities'.
\end{abstract}

\section{Introduction}

 Let $f\co M^{n-k}\to N^n$ be a continuous map of codimension $k$ between closed manifolds (all manifolds and maps between them are assumed smooth, unless stated otherwise). Then $f$ is said to {\em realize} both the mod $2$ singular homology class $z=f_*[M]\in H_{n-k}(N;\Z_2)$ (where $[M]\in H_{n-k}(M;\Z_2)$ is the fundamental class of the domain manifold) and its Poincar\' e dual cohomology class $x\in H^k(N;\Z_2)$. This paper addresses the following questions. When can a (co)homology class be realized by an immersion? When can a (co)homology class be realized by a map whose complexity is restricted in some way (for instance, by prescribing some finite set of allowed multi-singularity types)?

We first state our results, then put them in historical context. Recall that an {\em immersion} is a smooth map $f\co M^{n-k}\to N^n$ whose differential $df_x\co TM_x\to TN_{f(x)}$ is injective at each point $x\in M$.

\begin{thm}\label{immersions}
For any $k>1$ there exists a closed manifold $N_k$ and cohomology class $x_k\in H^{k}(N_k;\Z_2)$ which cannot be realized by an immersion. The manifold $N_k$ can be chosen to have dimension $4k+3$ if $k$ is even, and $4k+15$ if $k$ is odd.
\end{thm}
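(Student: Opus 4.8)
The plan is to recast realizability by an immersion as a lifting problem for a classifying map, and then to detect the failure of that lift by an explicit cohomology operation evaluated on a carefully chosen class.

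First I would translate the problem into homotopy theory. By the Pontryagin--Thom construction for immersions --- equivalently, by the theory of classifying spaces for singular maps applied to the non-singular case --- cobordism classes of codimension-$k$ immersions of closed manifolds into $N$ are in natural bijection with homotopy classes of maps $N\to\Gamma MO(k):=\Omega^\infty\Sigma^\infty MO(k)$, and the mod $2$ homology class realized by the immersion classified by $h$ is $\mathrm{PD}(h^*\mathfrak u_k)$, where $\mathfrak u_k\in H^k(\Gamma MO(k);\Z_2)$ is the image of the Thom class of the universal rank-$k$ bundle. Since every mod $2$ homology class of a closed manifold is realized by \emph{some} continuous map (Thom), a class $x\in H^k(N;\Z_2)$ is realizable by an immersion exactly when the map $\kappa_x\co N\to K(\Z_2,k)$ classifying $x$ lifts, up to homotopy, through the map $\tau\co\Gamma MO(k)\to K(\Z_2,k)$ classifying $\mathfrak u_k$. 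It therefore suffices to produce $N_k$ and $x_k\in H^k(N_k;\Z_2)$ for which $\kappa_{x_k}$ admits no such lift.

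Next I would run obstruction theory for this lift via the Moore--Postnikov tower of $\tau$. The map $\tau$ factors through ``realizing $x$ by a continuous map of manifolds'' --- which always lifts, by Thom --- followed by ``making that map an immersion'', that is, destabilizing the virtual normal bundle to an honest rank-$k$ bundle; so all the obstruction sits in this second step. Its homotopy fibre is highly connected: because $\pi^S_{k+j}(MO(k))$ maps isomorphically onto the unoriented bordism group $\mathfrak N_j$ for $0\le j\le k-1$, the fibre is $(2k-1)$-connected, and its bottom homotopy group is governed by $O/O(k)$, so it is essentially $\Z$ when $k$ is even and $\Z_2$ when $k$ is odd --- the structural source of the parity dichotomy. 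The obstructions to lifting $\kappa_x$ are thus a sequence of primary, then secondary and higher, stable cohomology operations built from Bockstein operators and Steenrod squares; and since on $\Z_2$-cohomology $\tau^*$ carries $Sq^I\iota_k$ to $Sq^I\mathfrak u_k$ while the sub-$\mathcal A$-module generated by $\mathfrak u_k$ is $\mathcal A$-free through degree $2k$ (Thom), the map $\tau^*$ is injective there, every obstruction below degree $2k+1$ vanishes identically, and the first potentially nontrivial obstruction lives in degree $2k+1$. Pinning it down requires a computation with the Adem relations (for instance $Sq^1Sq^k=(k-1)\,Sq^{k+1}$) and the Wu formula in which the parity of $k$ enters explicitly; this is the point at which the even case --- where a non-realizable class is already detected near degree $2k$ by an integral, Bockstein-type obstruction --- diverges from the odd case, where one must pass to a higher secondary operation near degree $2k+7$.

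Finally, for the construction I would take $N_k$ to be a closed manifold carrying $\iota_k$ together with enough of its orbit under the Steenrod algebra to support the relevant operation, and with cohomology controlled enough that the earlier obstructions and the indeterminacy of the relevant secondary operation vanish --- concretely, a thickening into a closed manifold of a small subcomplex of $K(\Z_2,k)$ realizing $\iota_k$ and the operations in play (of dimension $2k+1$ for $k$ even, $2k+7$ for $k$ odd, so of dimension $2(2k+1)+1=4k+3$, resp.\ $2(2k+7)+1=4k+15$ after thickening), or an explicit product of real projective spaces of the same total dimension --- and let $x_k\in H^k(N_k;\Z_2)$ be the class restricted from $\iota_k$. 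A direct calculation with the Cartan formula then shows that on $N_k$ all the earlier obstructions vanish while the first genuinely obstructive operation is nonzero on $x_k$, so $\kappa_{x_k}$ does not lift through $\tau$ and $x_k$ is not realizable by an immersion. The hard part is exactly this last step: isolating the first non-vanishing obstruction at the edge of the stable range of $MO(k)$ --- where its homotopy groups genuinely depart from $\mathfrak N_*$ --- verifying that all the preceding ones vanish, and exhibiting a manageable manifold on which the obstruction is nonzero; the jump from $4k+3$ to $4k+15$ is precisely the point at which the governing Hopf-invariant-type secondary operation changes.
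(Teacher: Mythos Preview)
Your framing---realizability by an immersion is a lifting problem for the classifying map $N\to K(\Z_2,k)$ through $\Gamma MO(k)\to K(\Z_2,k)$, and one should detect the failure by an operation that vanishes on the universal Thom class but not on $\iota_k$---is correct and agrees with the paper's set-up (Proposition~\ref{Wells} and Corollary~\ref{stable}). The thickening step and the resulting dimensions $4k+3$, $4k+15$ are also right.

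But the proposal has a real gap: the ``hard part'' you flag at the end is the entire content of the proof, and you have not supplied it. Two specific pieces are missing. First, you never say \emph{which} operation you are using or \emph{why} it vanishes on the Thom class. The paper's answer is direct and avoids the Moore--Postnikov tower entirely: one uses the primary stable operation $P=\beta\,Sq^I$ for any admissible $I$ of excess $k$, and shows $P(U_k)=0$ from the single structural fact that $\tilde H^*(MO(k);\Z)$ has only $2$-torsion (Brown), so that $\beta y=0$ iff $Sq^1 y=0$, and $Sq^1$ kills the square $Sq^I(U_k)=[Sq^J(U_k)]^2$. Second, you give no mechanism for showing the operation is nonzero on $\iota_k$; ``a direct calculation with the Cartan formula'' will not do it, because what is needed is an \emph{integral} statement, namely $\beta\,Sq^I(\iota_k)\neq 0$. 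The paper's input here is Browder's computation of $E_{(2)}^*(K(\Z_2,k))$ in the Bockstein spectral sequence (Theorem~\ref{E2}), which shows that $d_{(2)}\{[Sq^J\iota_k]^2\}\neq 0$ for suitable $J$, hence $\beta\,Sq^I(\iota_k)\neq 0$.

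There is also a misconception in your treatment of the odd case. You assert that for $k$ odd one must pass to a ``Hopf-invariant-type secondary operation''. In fact no secondary operation is needed: the primary operation $\beta\,Sq^{k+3}Sq^2Sq^1$ already does the job (take $J=(2,1)$, so $|J|+k$ is even and $j_1\neq 1$, and apply Browder's theorem). The jump from degree $2k+1$ to $2k+7$ is simply the degree of this longer primary operation, not a passage to a higher stage of the tower. Your proposed route through $k$-invariants of the Moore--Postnikov factorization would eventually recover the same obstructions, but it is considerably harder to execute (one must actually compute those $k$-invariants and control indeterminacy), whereas the paper's criterion ``any stable $P$ with $P(U_k)=0$ gives $P(x)=0$'' bypasses all of that.
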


The proof of Theorem \ref{immersions} makes use of the following explicit obstructions to realizability by immersions, in terms of stable cohomology operations. (Recall that a {\em stable cohomology operation} is a cohomology operation which commutes with the suspension isomorphism. The definitions of admissible sequence and excess in the context of the mod $2$ Steenrod algebra $\mathcal{A}$ will be recalled in Section 2.)

\begin{thm}\label{mod2}
Let $k>1$. Let $I$ be an admissible sequence of excess $e(I)=k$, and let $Sq^I\in\mathcal{A}$ be the corresponding monomial. If the cohomology class $x\in H^k(N;\Z_2)$ is realizable by an immersion, then $Sq^I(x)$ is the reduction mod $2$ of an integral class.

In particular, if $k$ is even and $\beta(x^2)$ is nonzero (where $\beta$ is the Bockstein associated to reduction mod $2$) then $x$ cannot be realized by an immersion.
\end{thm}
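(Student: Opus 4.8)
The plan is to pass to a classifying space for codimension-$k$ immersions and reduce the assertion to a statement about a single universal class. It is classical (cf.\ Wells, Koschorke--Sanderson, and the foundations of Sz\H{u}cs's theory of classifying spaces for singular maps) that cobordism classes of codimension-$k$ immersions of closed manifolds into $N$ are in natural bijection with $[N_+,\Gamma MO(k)]$, where $MO(k)=T\gamma_k$ is the Thom space of the universal rank-$k$ bundle and $\Gamma=\Omega^\infty\Sigma^\infty$, and that under this bijection the dual class of the immersion classified by $g\co N_+\to\Gamma MO(k)$ is $g^*(u)$. Here $u\in H^k(\Gamma MO(k);\Z_2)$ is the canonical class, i.e.\ the unique class with $\eta^*u=u_k$, where $\eta\co MO(k)\to\Gamma MO(k)$ is the unit of the adjunction and $u_k\in H^k(MO(k);\Z_2)$ is the Thom class. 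Hence, if $x$ is realized by an immersion then $x=g^*u$ for some $g$, so $Sq^I(x)=g^*(Sq^I u)$, and it suffices to prove that $Sq^I u$ is the mod $2$ reduction of an integral class in $H^{k+|I|}(\Gamma MO(k);\Z)$.

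Next I would pass to the bottom cell of $\Gamma MO(k)$. Since $MO(k)$ is $(k-1)$-connected, the summands in the stable splitting $\Sigma^\infty\Gamma MO(k)\simeq\bigvee_{j\ge1}D_jMO(k)$ of Snaith are $(jk-1)$-connected, so in degree $k$ only the bottom summand $D_1MO(k)=\Sigma^\infty MO(k)$ contributes; in particular $H^k(\Gamma MO(k);\Z_2)=\Z_2\langle u\rangle$ and $u$ is the image of $u_k$ under the canonical splitting $s\co\widetilde H^*(MO(k))\to\widetilde H^*(\Gamma MO(k))$ of $\eta^*$. As $s$ is induced by a map of spectra, it commutes with reduction of coefficients and with stable cohomology operations, so $Sq^I u=s(Sq^I u_k)$. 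Applying $s$ to an integral lift therefore reduces us to showing that $Sq^I u_k$ is the mod $2$ reduction of an integral class in $\widetilde H^{k+|I|}(MO(k);\Z)$.

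This is where the excess hypothesis is used. Write $I=(i_1,\dots,i_r)$ (admissible) with $e(I)=i_1-i_2-\cdots-i_r=k$ and put $I'=(i_2,\dots,i_r)$. Then $Sq^{I'}u_k$ has degree $k+i_2+\cdots+i_r=i_1$, so $Sq^I u_k=Sq^{i_1}(Sq^{I'}u_k)=(Sq^{I'}u_k)^2$. By the Thom isomorphism and the Wu formula, $Sq^{I'}u_k=P_{I'}(w)\,u_k$ for a polynomial $P_{I'}$ in the Stiefel--Whitney classes of $\gamma_k$, whence $Sq^I u_k=P_{I'}(w)^2\,u_k^2=P_{I'}(w)^2\cdot w_k(\gamma_k)\,u_k$. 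Now $w_k(\gamma_k)u_k=u_k^2$ is the mod $2$ reduction of the integral (twisted) Thom image of the twisted Euler class $e(\gamma_k)\in H^k(BO(k);\Z_{w_1})$, while $P_{I'}(w)^2$, being a square, is the mod $2$ reduction of an integral class on $BO(k)$ since all torsion in $H^*(BO(k);\Z)$ has order two. Multiplying these integral lifts, via the module structure of $\widetilde H^*(MO(k);\Z)$ over $H^*(BO(k);\Z)$, produces the required integral lift of $Sq^I u_k$. Finally, the `in particular' clause is the case $I=(k)$, $I'=\emptyset$: then $Sq^{(k)}(x)=x^2$ must be a mod $2$ reduction, and since $\beta$ is the composite of the integral Bockstein with reduction mod $2$, $\beta(x^2)\ne0$ obstructs this.

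I expect the main obstacle to be the two integrality inputs and their interplay --- the twisted Thom/Euler class computation of $u_k^2$ and the order-two torsion of $H^*(BO(k);\Z)$ --- together with making the classification of immersions by $\Gamma MO(k)$, and the identification of the dual class with $g^*u$, fully precise. The underlying idea, however, is short: the excess condition forces $Sq^I u$ to be a \emph{square on the Thom space}, where squares are automatically mod $2$ reductions, in contrast to a general space where they need not be (for instance $\iota_2^2$ is not a reduction in $K(\Z_2,2)$, since $H^4(K(\Z_2,2);\Z)=0$); this is exactly what gives the obstruction content. A parallel, classifying-space-free argument runs directly on $N$: the Gysin homomorphism of a self-transverse immersion $f$, together with its twisted integral refinement, gives $Sq^I(x)=f_!\big(w_k(\nu_f)\,P_{I'}(w(\nu_f))^2\big)$, and one lifts the bracketed class integrally after pulling back along the classifying map $M\to BO(k)$ of $\nu_f$.
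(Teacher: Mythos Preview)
Your argument is correct and follows the same overall strategy as the paper: reduce to the universal class $U_k$ and show that $Sq^I(U_k)$, being a square $\bigl(Sq^J(U_k)\bigr)^2$, lifts to integral cohomology. The executions differ in two respects. First, you invoke $\Gamma MO(k)$ together with the Snaith splitting to identify the universal class; the paper uses only Proposition~\ref{Wells} (stable maps $\Sigma^\ell N_+\to\Sigma^\ell MO(k)$), which already shows that any stable operation vanishing on $U_k$ vanishes on $x$, so the Snaith splitting is unnecessary machinery here. Second, for the key integrality step the paper argues indirectly: it shows that $i^*\co H^*(BO(k);\Z)\to H^*(BO(k-1);\Z)$ is onto, whence $\tilde H^*(MO(k);\Z)$ embeds in $H^*(BO(k);\Z)$ and so has only $2$-torsion; thus $\beta(y)=0\iff Sq^1(y)=0$ on $MO(k)$, and $Sq^1$ of a square is zero. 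You instead construct the lift explicitly as a product: the square of the twisted integral Thom class lands in \emph{untwisted} integral cohomology (since $\Z_{w_1}\otimes\Z_{w_1}\cong\Z$) and reduces to $U_k^2$, while $P_{I'}(w)^2$ lifts on $BO(k)$ by the same $2$-torsion fact. Your route is more constructive and avoids the surjectivity of $i^*$, at the cost of handling twisted coefficients; the paper's route is shorter once one has Brown's description of $H^*(BO(k);\Z)$.

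One small slip: in your final sentence you describe $\beta$ as ``the composite of the integral Bockstein with reduction mod $2$''. That composite is $Sq^1$, which annihilates every square and so could never obstruct anything. The $\beta$ in the theorem is the integral Bockstein $H^n(-;\Z_2)\to H^{n+1}(-;\Z)$ itself, and your deduction (that $x^2$ being a mod $2$ reduction forces $\beta(x^2)=0$) is correct with that reading.
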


The obstruction $\beta(x^2)$ in the case $k$ even is very natural, in light of Proposition \ref{singset} below: it is the integer cohomology class realized by the singular set of any stable\footnote{Recall that in singularity theory a smooth map $f\co M\to N$ is called {\em stable} if for any sufficiently close smooth map $f'\co M\to N$ there exist diffeomorphisms $g\co M\to M$ and $h\co N\to N$ such that $h^{-1}\circ f' \circ g=f$.} map realizing $x$.

Let $\tau$ be a finite set of codimension $k$ multi-singularities. A {\em multi-singularity} is a finite multiset of stable local singularities; more details will be given in Section 5 below. Recall \cite{R-Sz} that a stable map $f\co M^{n-k}\to N^n$ is called a {\em $\tau$-map} if at each point $y\in N$ the pre-image $f^{-1}(y)\subseteq M$ is finite and the local singularities of $f$ at the pre-image points, counted with multiplicity, form an element of $\tau$.

\begin{thm}\label{taumaps}
Let $k>1$, and let $\tau$ be any finite set of multi-singularities in codimension $k$. Then there exists a closed manifold $N_k$ and cohomology class $x_k\in H^{k}(N_k;\Z_2)$ which cannot be realized by a $\tau$-map.
\end{thm}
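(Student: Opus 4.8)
The plan is to run the argument behind Theorems~\ref{immersions} and~\ref{mod2}, with the classifying space relevant there replaced by the classifying space $X_\tau$ for $\tau$-maps. Recall from \cite{R-Sz} (and the subsequent work of Sz\H{u}cs on the subject) that cobordism classes of codimension $k$ $\tau$-maps into a closed manifold $N$ are classified by homotopy classes of maps $N\to X_\tau$, and that under this correspondence the Poincar\'e dual of the homology class realized by a $\tau$-map $f$ equals $\kappa_f^*(x_\tau)$, where $\kappa_f\co N\to X_\tau$ is the classifying map and $x_\tau\in H^k(X_\tau;\Z_2)$ is a universal class, the pullback of $\iota_k$ along a canonical map $p_\tau\co X_\tau\to K(\Z_2,k)$. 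Making the usual reduction from cobordism to honest realizability (replacing $N$ by $N\times S^m$ for $m$ large), a class $x\in H^k(N;\Z_2)$ is realizable by a $\tau$-map if and only if the classifying map $N\to K(\Z_2,k)$ of $x$ lifts through $p_\tau$. So the theorem amounts to producing, for each finite $\tau$, a closed manifold $N_k$ and a class $x_k\in H^k(N_k;\Z_2)$ for which this lifting problem has no solution.

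As before, the obstructions will be of the form ``$\theta(x)$ is not the mod $2$ reduction of an integral class'' for suitable stable operations $\theta$, but the admissible $\theta$ now depend on $\tau$. I would establish a $\tau$-analogue of Theorem~\ref{mod2}: there is a constant $d(\tau)$, depending only on $\tau$, such that for every admissible sequence $I$ with $e(I)=k$ and $|I|\ge d(\tau)$, realizability of $x$ by a $\tau$-map forces $Sq^I(x)$ to be the mod $2$ reduction of an integral class. When $\tau$ corresponds to immersions this should degenerate to Theorem~\ref{mod2}, with $d(\tau)=k$ (no constraint on $I$). Proving this is the heart of the matter, and the step in which finiteness of $\tau$ is essential: one must control the structure of $H^*(X_\tau;\Z_2)$ as a module over $\mathcal{A}$ near $x_\tau$ — equivalently, the Postnikov tower of $p_\tau$ — and the mechanism is that the finitely many singularity strata of $X_\tau$ (organized by the Kazarian-type stratification, as in Section~5) have bounded depth, so they can ``absorb'' only those $Sq^I$ of bounded complexity. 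Dually, this is the assertion that the iterated singular-set construction underlying Proposition~\ref{singset} terminates after $\mathrm{depth}(\tau)$ steps, at which point one has reached an immersed submanifold and Theorem~\ref{mod2} applies.

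Granting the obstruction, the pair $(N_k,x_k)$ is produced as in the proof of Theorem~\ref{immersions}: take $N_k$ to be a suitable product of real projective spaces (and Dold manifolds) modelling $K(\Z_2,k)$ through a range of dimensions, and $x_k\in H^k(N_k;\Z_2)$ the corresponding class. A convenient choice of obstruction is $I=(2^{r-1}k,\ldots,2k,k)$ with $r$ large enough that $|I|=(2^r-1)k\ge d(\tau)$; then $e(I)=k$ and $Sq^I(x_k)=x_k^{2^r}$, so one only needs to arrange that $x_k^{2^r}$ is not integrally reducible, i.e.\ $\beta(x_k^{2^r})\ne 0$, in the chosen model. (If powers turn out to be inconvenient, any admissible $I$ of excess $k$ and large length, together with a matching model, will do.) Because this forces $\dim N_k$ to grow with $\tau$ — roughly like $2^r k$ — no uniform dimension bound is claimed, in contrast to Theorem~\ref{immersions}.

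The main obstacle is the middle step: formulating and proving the bound $d(\tau)$. This requires understanding the homotopy type of $X_\tau$, or at least its $\mathcal{A}$-module cohomology, in a range of dimensions that grows with $\tau$ — in effect quantifying how the finitely many admitted singularity types alter the Steenrod action on the universal class — and then checking that the resulting composite operation does not vanish identically, so that the obstruction is non-vacuous on the manifolds $N_k$ above. A secondary, more routine, difficulty is the passage from the cobordism classification to literal realizability, and the verification that the model spaces $N_k$ can be taken to be closed smooth manifolds carrying the required cohomology classes.
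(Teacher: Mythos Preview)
Your route is genuinely different from the paper's, and the step you yourself flag as the ``main obstacle'' is a real gap that is neither filled nor clearly fillable along the lines you sketch.

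The paper does not seek explicit stable obstructions of the form $\beta Sq^I$. It runs instead the non-constructive, Serre-style growth comparison alluded to in the introduction. The key input is Proposition~\ref{Hilbert}: the spectral sequence associated to the stratification of $X_\tau$ has $E_2$-term bounded by a finite sum of (degree-shifted) cohomologies $H^*\big(B(S_r\int G_i)\big)$ of wreath products of symmetric groups with the compact Lie groups $G_i$ attached to the local singularities, and by Venkov's theorem each such algebra is finitely generated over $\Z_2$. Hence $\dim_{\Z_2}H^N(X_\tau)$ is eventually dominated by $\dim_{\Z_2}H^N\big(K(\Z_2,k)\big)$, which is polynomial on infinitely many generators. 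For large $N$ the map $U_\tau^*\co H^N\big(K(\Z_2,k)\big)\to H^N(X_\tau)$ cannot be injective, so $U_\tau\co X_\tau\to K(\Z_2,k)$ admits no homotopy section, and thickening a skeleton of $K(\Z_2,k)$ (exactly as in the proof of Theorem~\ref{immersions}) produces $(N_k,x_k)$.

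Your proposal instead posits a bound $d(\tau)$ with $\beta Sq^I(U_\tau)=0$ in $H^*(X_\tau;\Z)$ whenever $e(I)=k$ and $|I|\ge d(\tau)$, but gives no argument. The proof of Theorem~\ref{mod2} used the special fact that $\tilde H^*(MO(k);\Z)$ has only $2$-torsion, so that $\beta=0$ reduces to $Sq^1=0$; nothing comparable is available for $X_\tau$, and the stratification depth does not obviously control the integral Bockstein on $U_\tau$. The ``iterated singular set'' heuristic also does not feed back into Theorem~\ref{mod2}: the mod~$2$ class realized by the singular set is $\rho\beta(x^2)=Sq^1(x^2)=0$, so after one step there is no nonzero mod~$2$ class to which to re-apply the argument. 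Finally, your preferred test class is already problematic at the universal level: for $r\ge 2$ the element $\{\iota_k^{2^r}\}$ is a square in $E_{(2)}^*\big(K(\Z_2,k)\big)$ and $d_{(2)}$ is a derivation in characteristic~$2$, so $d_{(2)}\{\iota_k^{2^r}\}=0$ and Lemma~\ref{diff} yields nothing about $\beta(\iota_k^{2^r})$. (A minor aside: products of real projective spaces and Dold manifolds model products of $K(\Z_2,1)$'s, not $K(\Z_2,k)$ for $k>1$; the paper uses the regular-neighbourhood thickening.)
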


Theorems \ref{immersions} and \ref{taumaps} should be contrasted with the well known fact that any one-dimensional cohomology class $x \in H^1(N;\Z_2)$ in a closed manifold is realizable by an embedding of a closed manifold (recall that an {\em embedding} is an immersion that is a homeomorphism onto its image). This follows since $x=w_1(\xi)$ is the mod $2$ Euler class of some line bundle $\xi$ over $N$, and is therefore realized by the zero set of a generic section of $\xi$.

It is somewhat surprising that a result such as Theorem \ref{immersions} has not found its way into the literature before now. Ever since Poincar\'e and the birth of homology, basic questions concerning realization of homology classes by maps from closed manifolds have had a profound effect on the development of Algebraic Topology. Thom showed in his landmark paper \cite{TH} that every mod $2$ homology class in a finite polyhedron can be realized by a continuous map, thus giving an affirmative answer to a problem posed by Steenrod. In its original formulation \cite{Eil49}, Steenrod's question was about realizing integral homology classes by maps from {\em oriented} manifolds, and Thom also gave negative results in this direction, by constructing examples of non-realizable integral homology classes in dimensions $7$ and above.

Thom's method was to reduce Steenrod's problem to the related question concerning realizability of homology classes by embeddings. The key insight which allowed him to solve this problem was that a homology class in the compact manifold $N$ can be realized by a codimension $k$ embedding if and only if its Poincar\' e dual cohomology class is induced from the Thom class by a map from $N/\partial N$ into the Thom space of the universal $k$-dimensional bundle. In other words, the Thom space of the universal $k$-dimensional bundle is the classifying space for codimension $k$ embeddings. One can use this result to find homology classes which cannot be realized by embeddings, in two closely related ways.

 The first is constructive, in that it gives specific obstructions to realizability. Namely, one shows that some expression $P$ involving cup products and cohomology operations vanishes on the Thom class. If the dual of a cohomology class $x$ is to be realizable, that same expression must also vanish on $x$ (this approach was taken by Thom \cite[Chapitre II]{TH}, and more recently by the authors of \cite{BHK} to exhibit new examples of classes non-realizable by submanifolds of certain types).

 The second approach is less constructive, but equally valid. One compares the graded rank of the mod $2$ cohomology of the Thom space of the universal $k$-dimensional bundle with that of the corresponding Eilenberg-Mac Lane space. In high degrees the latter is larger, and so this approach shows that in all dimensions $k>1$ there exists a mod $2$ cohomology class in some closed manifold of sufficiently high dimension which cannot be realized by an embedding (Thom says that this argument, outlined on page 46 of \cite{TH}, was patterned after a remark of J.\-P.\ Serre).

The current paper strengthens both of these approaches, by varying both the choice of classifying space and the maps allowed. To prove our Theorem \ref{immersions} about non-realizability of classes by immersions, we use the fact (due to Wells \cite{We}) that immersions of codimension $k$ are classified by {\em stable}\footnote{Here we use `stable' in the sense of stable homotopy theory, to describe a map which exists after sufficiently many suspensions. We trust that this homonymy will not cause confusion.} maps to the universal Thom space in dimension $k$ (see Section 2). Thus any stable cohomology operation $P$ which vanishes on the universal Thom class must also vanish on any cohomology class realized by an immersion. In Section 3 we apply a result of Browder \cite{Bro74} to show that certain such operations $P$ composed of Steenrod squares and Bockstein operators are nonzero on the fundamental cohomology class. This allows us to construct our smooth examples by `thickening' these universal examples.

 In Section 4, we show that the obstruction $\beta(x^2)$, for an even dimensional cohomology class $x$, is the integer cohomology class realized by the singular set of any stable map realizing $x$. This section also contains a generalisation (Lemma \ref{squares}) of a result due to Thom \cite{Th50} which gives a geometric construction of the Steenrod squares (see also \cite{EG11}).

In Section 5 we recall the construction of the classifying space $X_\tau$ for $\tau$-maps, due to Rim\' anyi and the second author \cite{R-Sz}. In Section 6 we observe that the dimensions of the cohomology groups of $X_\tau$ (viewed as vector spaces over $\Z_2$) grow not faster than those of a finitely generated $\Z_2$-algebra, which leads to the proof of Theorem \ref{taumaps}.

The authors wish to thank Alain Cl\' ement, for including in his thesis \cite{Cle} appendices listing the integral cohomology groups of some $2$-local Eilenberg-Mac Lane spaces, and for a timely and insightful email regarding the Bockstein spectral sequence for these spaces, both of which were vital to the proof of Theorem \ref{immersions}.

{\em For the rest of the paper, all homology and cohomology groups are to be understood with coefficients in $\Z_2$, the ring of integers mod $2$, unless explicitly indicated otherwise.}

\section{Immersions in cohomology classes}

In this section we recall some known facts pertaining to the realization of cohomology classes by immersions, and give the proof of Theorem \ref{mod2}. We fix a closed manifold $N^n$ and a codimension $k>1$.

 If $\zeta$ is a vector bundle on $N$ of rank $k$, we denote by $\mathcal{I}(N;\zeta)$ the bordism group of immersions in $N$ with $\zeta$-structure. The elements of this group are equivalence classes of triples $(M^{n-k},f,v)$ under a suitable bordism relation, where $M$ is a closed manifold, $f\co M^{n-k}\imm N^n$ is an immersion of codimension $k$, and $v\co \nu_f\to \zeta$ is a bundle map isomorphic on fibres. We refer the reader to \cite{EG07} for more details.

Recall that the universal $O(k)$-bundle $\gamma_k\to BO(k)$ is a rank $k$ vector bundle with the property that the inclusion of the nonzero vectors into the total space is homotopically equivalent to the standard inclusion $i\co BO(k-1)\to BO(k)$. We denote by $MO(k)$ the Thom space of $\gamma_k$, and by $U_k\in H^k(BO(k),BO(k-1))\cong\tilde{H}^k(MO(k))$ the universal Thom class. Note that the Thom class is represented by a pointed map $U_k\co MO(k)\to K(\Z_2,k)$.

If $X$ is a based space and $\ell\geq 0$, we denote by $\sigma^\ell\co \tilde{H}^k(X) \stackrel{\cong}{\to} \tilde{H}^{k+\ell}(\Sigma^\ell X)$ the $\ell$-fold reduced suspension isomorphism. We remark that $\sigma^\ell(U_k)\in \tilde{H}^{k+\ell}(\Sigma^\ell MO(k))$ is the Thom class of the direct sum $\gamma_k\oplus\varepsilon^\ell$ of the universal bundle with a trivial bundle.

There is a natural transformation $\Theta\co \mathcal{I}(N;\gamma_k)\to H^k(N)$ which sends the bordism class of an immersion $f\co M^{n-k}\imm N^n$ to the cohomology class it realizes (see \cite[Section 3.2]{EG07}). Representing these functors homotopically, we have the following diagram
\begin{equation}
\xymatrix{
\mathcal{I}(N;\gamma_k) \ar[r]^-{\Theta} \ar[d]_{\cong} & H^k(N) \ar[d]_{\cong}\\
[\Sigma^\ell N_+,\Sigma^\ell MO(k) ]_{\ell\gg 0} \ar[r] &  [\Sigma^\ell N_+,K(\Z_2,k+\ell)],
}
\end{equation}
where square brackets denote pointed homotopy classes of maps. The right hand isomorphism comes from the isomorphism $H^k(N)\cong \tilde{H}^k(N_+)$, where $N_+$ denotes $N$ with a disjoint base-point, composed with the suspension isomorphism and the classification of reduced cohomology as maps into Eilenberg-Mac Lane spaces. The left-hand isomorphism identifying $\mathcal{I}(N;\gamma_k)$ with a stable homotopy group is the Pontrjagin-Thom-Wells theorem \cite{We}. The lower horizontal arrow is induced by the map $\Sigma^\ell MO(k)\to K(\Z_2,k+\ell)$ which represents $\sigma^\ell(U_k)\in \tilde{H}^{k+\ell}(\Sigma^\ell MO(k))$.

From these remarks it is trivial to deduce the following slight extension of Thom's fundamental result on realizability of cohomology classes by embeddings \cite[Th\'{e}or\`{e}me II.1]{TH}.
\begin{prop}\label{Wells}
The cohomology class $x\in H^k(N)$ is realizable by an immersion if, and only if, there exists a map $F\co \Sigma^\ell N_+\to \Sigma^\ell MO(k)$, where $\ell$ is large, such that
\begin{equation}
\sigma^\ell(x) = F^* \sigma^\ell (U_k) \in \tilde{H}^{k+\ell}(\Sigma^\ell N_+).
\end{equation}
\end{prop}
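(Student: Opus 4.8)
The plan is to chase the commutative square~(1). First I would isolate the one geometric input not yet made explicit: since $BO(k)$ classifies rank-$k$ real vector bundles, every codimension-$k$ immersion $f\co M^{n-k}\imm N^n$ of closed manifolds admits a $\gamma_k$-structure $v\co\nu_f\to\gamma_k$, obtained from a classifying map for the normal bundle $\nu_f$ covered by the canonical bundle map to $\gamma_k$. Thus if $f$ realizes $x$, the triple $(M,f,v)$ represents an element of $\mathcal{I}(N;\gamma_k)$ with $\Theta([M,f,v])=x$; conversely the underlying immersion of any class in $\mathcal{I}(N;\gamma_k)$ sent by $\Theta$ to $x$ realizes $x$. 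Hence $x$ is realizable by an immersion if and only if $x$ lies in the image of $\Theta$.

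Next I would transport the condition ``$x\in\operatorname{im}\Theta$'' across the vertical isomorphisms of~(1). Since both vertical maps are bijections and the square commutes, $x\in\operatorname{im}\Theta$ if and only if the image of $x$ in $[\Sigma^\ell N_+,K(\Z_2,k+\ell)]$ --- namely the homotopy class corresponding, under the suspension isomorphism and the Eilenberg-Mac Lane classification of reduced cohomology, to $\sigma^\ell(x)\in\tilde{H}^{k+\ell}(\Sigma^\ell N_+)$ --- lies in the image of the lower horizontal arrow. Here one keeps in mind that the left-hand group is a colimit over $\ell$, so this is an existence statement valid for $\ell$ sufficiently large, whereas the right-hand group is already stable under suspension.

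Finally I would unwind the lower horizontal arrow, which by construction is post-composition with a fixed map $u\co\Sigma^\ell MO(k)\to K(\Z_2,k+\ell)$ representing $\sigma^\ell(U_k)$. Under the identification of $[\Sigma^\ell N_+,K(\Z_2,k+\ell)]$ with $\tilde{H}^{k+\ell}(\Sigma^\ell N_+)$, the class of $u\circ F$ is precisely $F^*\sigma^\ell(U_k)$ for any $F\co\Sigma^\ell N_+\to\Sigma^\ell MO(k)$, so the image of the lower arrow is exactly $\{F^*\sigma^\ell(U_k) : F\}$. Chaining the three equivalences yields the Proposition: $x$ is realizable by an immersion if and only if $\sigma^\ell(x)=F^*\sigma^\ell(U_k)$ for some $F$ and some large $\ell$.

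I do not expect a serious obstacle here: the real content --- the Pontrjagin-Thom-Wells identification of $\mathcal{I}(N;\gamma_k)$ with the stable homotopy group and the commutativity of~(1) --- is taken as known from the preceding discussion, after which the argument is a formal diagram chase. The only points deserving a word of care are the passage to the colimit in $\ell$ on the left-hand side, and the remark that $\gamma_k$-structures on a given immersion always exist and all realize the same cohomology class, so that the structured bordism group $\mathcal{I}(N;\gamma_k)$ genuinely detects realizability by unstructured immersions.
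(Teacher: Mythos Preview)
Your proposal is correct and follows essentially the same approach as the paper, which simply declares the proposition ``trivial to deduce'' from the commutative square~(1) and the preceding remarks. You have merely spelled out the diagram chase in detail, including the observation that every codimension-$k$ immersion carries a $\gamma_k$-structure so that realizability by an immersion is equivalent to lying in the image of $\Theta$; this is exactly what the paper leaves implicit.
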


\begin{cor}\label{stable}
Let $P$ be a stable cohomology operation such that $P(U_k)=0$. If $x\in H^k(N)$ is realizable by an immersion then $P(x)= 0$ also.
\end{cor}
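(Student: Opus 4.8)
The plan is to deduce this immediately from Proposition \ref{Wells} together with the defining property of a stable cohomology operation, namely that it commutes with suspension. So this is a formal corollary rather than something requiring new ideas; the work has already been done in setting up the Pontrjagin--Thom--Wells classification.

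Concretely, suppose $x\in H^k(N)$ is realizable by an immersion. By Proposition \ref{Wells} there is an integer $\ell\gg 0$ and a pointed map $F\co \Sigma^\ell N_+\to \Sigma^\ell MO(k)$ with $\sigma^\ell(x)=F^*\sigma^\ell(U_k)$ in $\tilde{H}^{k+\ell}(\Sigma^\ell N_+)$. Apply $P$ to both sides; since $P$ is a natural transformation, $P$ commutes with $F^*$, giving $P(\sigma^\ell(x))=F^*\big(P(\sigma^\ell(U_k))\big)$. Now use stability of $P$, i.e.\ that $P$ commutes with the suspension isomorphism $\sigma^\ell$: the left-hand side becomes $\sigma^\ell(P(x))$, while $P(\sigma^\ell(U_k))=\sigma^\ell(P(U_k))=\sigma^\ell(0)=0$ by hypothesis. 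Hence $\sigma^\ell(P(x))=0$, and since $\sigma^\ell$ is an isomorphism we conclude $P(x)=0$.

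The only bookkeeping point is the passage between unreduced cohomology $H^k(N)$ and reduced cohomology $\tilde{H}^k(N_+)$ used in the diagram preceding Proposition \ref{Wells}, and the observation that a stable operation on $\tilde{H}^*$ is compatible with this identification and with $\sigma^\ell$; none of this presents a genuine obstacle. In short, there is no hard step: the corollary records the standard principle that any natural, stable algebraic operation vanishing on the universal Thom class is an obstruction to realizability by immersions, and it is exactly this principle that the later sections exploit by producing explicit such operations $P$ built from Steenrod squares and Bocksteins.
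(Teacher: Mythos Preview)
Your argument is correct and is exactly the intended one: the paper states the corollary without proof because it follows formally from Proposition~\ref{Wells} by naturality of $P$ and its commutation with the suspension isomorphism, precisely as you have written out.
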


The stable operations in mod $2$ cohomology make up the mod $2$ Steenrod algebra $\mathcal{A}$, which consists of non-commuting polynomials in the Steenrod squares $Sq^i\co H^*(-)\to H^{*+i}(-)$, subject to the Adem relations. We recall some standard notation (as found for example in \cite{MT}). We denote by $Sq^I=Sq^{i_1}Sq^{i_2}\cdots Sq^{i_r}$ the Steenrod operation corresponding to the multi-index $I=(i_1,\ldots, i_r)$ of non-negative integers. Such a sequence is called {\em admissible} if $i_j\geq 2 i_{j+1}$ for $j = 1,\ldots , r-1$. Its {\em dimension} is $|I|=\sum_j i_j$, and its {\em excess} is $e(I)=\sum_j i_j - 2i_{j+1} = 2i_1 - |I|$. The significance of these definitions comes from Serre's Theorem \cite{Ser53} describing the mod $2$ cohomology of Eilenberg-Mac Lane spaces as a polynomial algebra
  \begin{equation}\label{Serre}
  H^*\big(K(\Z_2,k)\big) = \Z_2\left[Sq^I(\iota_k)\mid I\mbox{ admissible, }e(I)<k\right].
  \end{equation}
  Here $\iota_k\in H^k\left( K(\Z_2,k)\right)\cong \mathrm{Hom}(\Z_2,\Z_2)$ is the fundamental class corresponding to the identity homomorphism. We remark that if $x$ is any cohomology class in degree $k$ and $I=(i_1,i_2,\ldots ,i_r)$ has excess $k$, then $Sq^I(x) = \left[Sq^J(x)\right]^2$, where $J=(i_2,\ldots , i_r)$.

  As it turns out, Steenrod operations alone are insufficient for finding obstructions to realizability by immersions - we also have to consider the Bockstein coboundary associated to the short exact coefficient sequence $0\to \Z \to \Z \to \Z_2 \to 0$. This gives rise to the long exact sequence in cohomology
\begin{equation}\label{exact}
\xymatrix{
\cdots \to  H^n(X,A;\Z) \ar[r]^-{\cdot 2} & H^n(X,A;\Z) \ar[r]^-{\rho} & H^n(X,A) \ar[r]^-\beta & H^{n+1}(X,A;\Z)  \to\cdots
}
\end{equation}
for any space pair $(X,A)$, where $\rho$ denotes reduction mod $2$. Note that $\beta$ is a stable operation of degree one, with the property that $y\in H^*(X,A)$ is the reduction mod $2$ of an integral class if and only if $\beta(y)=0$. We also remark that $\rho\circ\beta = Sq^1\co H^*(X,A)\to H^{*+1}(X,A)$.

\begin{proof}[Proof of Theorem \ref{mod2}]
We aim to show that $\beta Sq^I(U_k)=0$ whenever $I$ has excess $k$. Theorem \ref{mod2} will then follow directly from Corollary \ref{stable} and the exact sequence (\ref{exact}).

We first claim that the long exact sequence in integral cohomology of the pair $(BO(k),BO(k-1))$ splits into short exact sequences
\begin{equation}
\xymatrix{
0 \to H^*(BO(k),BO(k-1);\Z) \ar[r]^-{j^*} & H^*(BO(k);\Z) \ar[r]^-{i^\ast} & H^*(BO(k-1);\Z)  \to 0.
}
\end{equation}
To see this, it suffices to check that $i^\ast$ is surjective in every positive dimension. This follows from the description of the integer cohomology rings of the $BO(k)$, given by Brown in \cite[Theorem 1.6]{Bro82}. Indeed, $H^*(BO(k);\Z)$ is a quotient of a polynomial ring generated by the Pontrjagin classes $p_i(\gamma_k)$ and the Bockstein images of certain monomials in the Stiefel-Whitney classes $w_i(\gamma_k)$. Since $\gamma_{k-1}\cong i^*\gamma_k$, and $i^*$ is a ring homomorphism commuting with the Bockstein operator, it follows that each polynomial generator in $H^*(BO(k-1);\Z)$ is the image under $i^*$ of a generator of $H^*(BO(k);\Z)$, and the claim is proved.

It is well known that all torsion in $H^*(BO(k);\Z)$ is of order $2$ (see \cite[Lemma 2.4]{Bro82} for example), and it follows that the same is true of $H^*(BO(k),BO(k-1);\Z)$. It is also well known \cite[Lemma 2.2]{Bro82}, and easy to see using (\ref{exact}), that if all torsion in $H^{n+1}(X,A;\Z)$ is of order $2$ and $y\in H^n(X,A)$, then $\beta(y)=0$ if and only if $Sq^1(y)=0$. Hence $\beta Sq^I(U_k)=0$ if and only if $Sq^1 Sq^I(U_k)=0$. The latter is true since $Sq^1$ is a derivation and $Sq^I(U_k) = \left[Sq^J(U_k)\right]^2$.

In particular, if $I=(k)$ then $\beta Sq^k(U_k) = \beta (U_k^2) = 0$. Note that if $k = 2m +1 $ is odd then the operation $\beta Sq^{2m+1}$ is identically zero, as follows from the relation $Sq^{2m+1}=Sq^1 Sq^{2m}$ and the exact sequence (\ref{exact}).
\end{proof}

\section{Cohomology of Eilenberg-Mac Lane spaces}

Our aim in this section is to prove Theorem \ref{immersions}, by exhibiting cohomology classes $x_k\in H^k(N_k)$ and operations $Sq^I$ with $e(I)=k$ such that $\beta Sq^I(x_k)\neq 0$. If we are to have any chance of success, we must be able to do this for the universal example given by the fundamental class $\iota_k\in H^k(K(\Z_2,k))$. Hence we need to recall some facts about the mod $2$ Bockstein spectral sequence for the spaces $K(\Z_2,k)$, all of which can be found in the paper of Browder \cite{Bro74}.

The long exact sequence (\ref{exact}) (when $A=\emptyset$) rolls up into an exact couple
\begin{equation}
\xymatrix{
H^*(X;\Z) \ar[rr]^-{\cdot 2} & & H^*(X;\Z) \ar[ld]^-{\rho} \\
 & H^*(X) \ar[ul]^-{\beta} &
 }
 \end{equation}
 which gives rise to a spectral sequence in the usual way. This is the well known {\em mod $2$ Bockstein spectral sequence} \cite{Bro74}, \cite{MT}. It has first page $E_{(1)}^*(X)=H^*(X)$ and first differential $d_{(1)}=\rho\circ\beta = Sq^1$. The $E_{(2)}^*(X)$ page is therefore the cohomology of $H^*(X)$ with respect to the derivation $Sq^1$. Given an element $y\in H^*(X)$ with $Sq^1(y)=0$, we denote by $\lbrace y\rbrace\in E_{(2)}^*(X)$ the class represented by $y$. The following lemma is then obvious from the construction of the spectral sequence.

\begin{lemma}\label{diff}
Let $y\in H^*(X)$ with $Sq^1(y)=0$. If $d_{(2)}\lbrace y\rbrace \in E^{\ast +1}_{(2)}(X)$ is nonzero, then so is $\beta(y)\in H^{*+1}(X;\Z)$.
\end{lemma}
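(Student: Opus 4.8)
The plan is to recall the explicit form of the second differential $d_{(2)}$ coming from the derived couple, and then to observe that on a class $\{y\}$ it is computed simply by halving $\beta(y)$; the lemma then drops out by contraposition.

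First I would pass to the derived couple of the exact couple above. Its $E$-term is $E_{(2)}^*(X) = \ker Sq^1/\mathrm{im}\, Sq^1$, its $D$-term is the image of multiplication by $2$ in $H^*(X;\Z)$, and its structure maps $i',j',k'$ are the standard ones, with $d_{(2)} = j'\circ k'$. I would then trace $\{y\}$ through this composite. Since $Sq^1(y)=\rho\beta(y)=0$, exactness of (\ref{exact}) puts $\beta(y)$ in $\ker\rho=\mathrm{im}(\cdot 2)$, so $k'$ sends $\{y\}$ to $\beta(y)$, legitimately regarded as an element of the derived $D$-term; writing $\beta(y)=2w$ with $w\in H^{*+1}(X;\Z)$, the map $j'$ then carries it to $\{\rho(w)\}$. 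Thus $d_{(2)}\{y\}=\{\rho(w)\}$ for any integral $w$ with $2w=\beta(y)$ — this is exactly the assertion that $d_{(2)}$ is the secondary Bockstein. (The same formula also falls out of a one-line cochain computation: lift a cocycle for $y$ to an integral cochain $\tilde y$, write $\delta\tilde y=2c$; since $[c]$ reduces mod $2$ to $Sq^1(y)=0$ one may replace $c$ by $c-\delta e$, for a suitable integral cochain $e$, so that it becomes $2c'$ with $c'$ a cocycle, and then $\beta(y)=2[c']$ and $d_{(2)}\{y\}=\{\rho([c'])\}$.)

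Granting this, the lemma is immediate: if $\beta(y)=0$ one may take $w=0$, so that $d_{(2)}\{y\}=\{\rho(0)\}=0$; contrapositively, $d_{(2)}\{y\}\neq 0$ forces $\beta(y)\neq 0$, as required.

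I do not expect a real obstacle here — this is precisely the sense in which the statement is ``obvious from the construction of the spectral sequence''. The only point worth a moment's thought is that the representative $\rho(w)$ of $d_{(2)}\{y\}$ is independent of the choice of $w$ with $2w=\beta(y)$: two choices differ by a $2$-torsion integral class, which by exactness of (\ref{exact}) is of the form $\beta(u)$, and $\rho\beta(u)=Sq^1(u)$ is a boundary on the $E_{(2)}$-page. This, however, is just part of the routine check that the derived couple is well defined, and in any event we only need the implication ``$\beta(y)=0\Rightarrow d_{(2)}\{y\}=0$'', for which the explicit representative constructed above already does the job.
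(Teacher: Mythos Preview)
Your argument is correct and is precisely an explicit unpacking of the derived-couple construction, which is what the paper has in mind when it declares the lemma ``obvious from the construction of the spectral sequence'' and offers no further proof. There is nothing to compare: the paper gives no argument, and yours is the canonical one.
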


We wish to apply this lemma with $y\in H^*\big(K(\Z_2,k)\big)$. Browder gave a detailed description of the mod $p$ Bockstein spectral sequence of $K(\Z_{p^s},k)$ for all primes $p$ and $s\geq 1$. Here we state only the small part of his results (in the case $p=2$ and $s=1$) needed for our purpose.

\begin{thm}[{Browder, \cite[Theorem 5.5]{Bro74}}]\label{E2}
The second page of the mod $2$ Bockstein spectral sequence for $K(\Z_2,k)$ is a polynomial algebra over $\Z_2$,
\begin{equation}
E_{(2)}^*\big( K(\Z_2,k)\big) = \Z_2\left[ \lbrace G^2\rbrace, d\lbrace G^2\rbrace\right].
\end{equation}
Here $G$ runs over the even dimensional generators of $H^*\big( K(\Z_2,k)\big)$ (excluding the generator $Sq^1(\iota_k)$ in the case $k$ odd), and $d=d_{(2)}$ is the second differential. Hence $G=Sq^J(\iota_k)$ where $J=(j_1,\ldots , j_s)$ is admissible of excess less than $k$ such that $|J|+k$ is even and $j_1\neq 1$.
\end{thm}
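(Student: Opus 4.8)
The plan is to compute the second page directly from Serre's description \eqref{Serre} of $H^*\big(K(\Z_2,k)\big)$. Since $E_{(1)}^* = H^*\big(K(\Z_2,k)\big)$ and $d_{(1)} = Sq^1$, this amounts to computing the cohomology of the polynomial algebra \eqref{Serre} with respect to the derivation $Sq^1$, and then identifying the induced differential $d_{(2)}$.

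The first step is to pin down the action of $Sq^1$ on the Serre generators. Writing $I = (i_1, \ldots, i_r)$, the Adem relations $Sq^1 Sq^i = Sq^{i+1}$ (for $i$ even) and $Sq^1 Sq^i = 0$ (for $i$ odd), together with the Cartan formula, show that $Sq^1\big(Sq^I(\iota_k)\big)$ is zero if $r \geq 1$ and $i_1$ is odd, and otherwise equals $Sq^{I'}(\iota_k)$ with $I' = (i_1+1, i_2, \ldots, i_r)$ of excess $e(I)+1$; moreover $Sq^{I'}(\iota_k)$ is again a Serre generator when $e(I) \leq k-2$, whereas, by the squaring identity recalled after \eqref{Serre}, it equals $\big(Sq^J(\iota_k)\big)^2$ with $J = (i_2, \ldots, i_r)$ when $e(I) = k-1$. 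So the polynomial generators of $H^*$ organize into clean pairs $\{A, B\}$, where $B = Sq^1(A)$ is again a generator and $Sq^1(B) = 0$, together with ``exceptional'' generators $C$ whose $Sq^1$-image is the square of a Serre generator; a count of degrees shows that each exceptional $C$, and the generator whose square it hits, is odd-dimensional. The basic computational input is then that $H\big(\Z_2[u,v],d\big) = \Z_2[u^2]$ whenever $du = v$, $dv = 0$.

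I would then assemble $E_{(2)}^*$ from this data. By the K\"unneth theorem and the basic input, a clean pair $\{A, B\}$ with $A$ even-dimensional contributes a surviving class $\{A^2\}$ — one of the $\{G^2\}$ — and the degree count shows it is not a $d_{(1)}$-boundary. An exceptional generator $C$, meanwhile, either enters a long acyclic chain (with $Sq^1(C') = C^2$, and so on) that contributes nothing, or — when $Sq^1(C) = \big(Sq^1(G)\big)^2$ for an even-dimensional generator $G$ — yields the class $d_{(2)}\{G^2\}$, represented by the cycle $C + G\cdot Sq^1(G)$, this identification being made via Lemma~\ref{diff} and the description of $d_{(2)}$ as a secondary Bockstein; one checks directly that the square of this cycle is a $d_{(1)}$-boundary. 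Tracking the entanglement among the low-dimensional generators $\iota_k$, $Sq^1(\iota_k)$, $Sq^{k-1}(\iota_k), \ldots$, and noting that for $k$ odd the generator $Sq^1(\iota_k)$ contributes nothing since $\big(Sq^1(\iota_k)\big)^2 = Sq^1\big(\iota_k\cdot Sq^1(\iota_k)\big)$, one obtains the asserted description of $E_{(2)}^*$ and its second differential.

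I expect the main obstacle to be precisely this last assembly: controlling the infinitely many exceptional generators and their mutual entanglement, and verifying that what survives is exactly the free graded-commutative $\Z_2$-algebra on the $\{G^2\}$ and the $d_{(2)}\{G^2\}$, no more and no less. A more conceptual route, and presumably the one taken by Browder, is induction on $k$ via the path--loop fibration $K(\Z_2,k-1) \to PK(\Z_2,k) \to K(\Z_2,k)$: Borel's theorem and Kudo's transgression theorem $\tau(Sq^i x) = Sq^i\tau(x)$ transport the polynomial generators and their Bockstein behavior from $H^*\big(K(\Z_2,k-1)\big)$ to $H^*\big(K(\Z_2,k)\big)$, reducing Theorem~\ref{E2} to the same statement one dimension lower, the base case $K(\Z_2,1) = \mathbb{RP}^\infty$ having $E_{(2)}^* = \Z_2$.
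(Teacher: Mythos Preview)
The paper does not supply a proof of this theorem: it is quoted as Browder's result \cite[Theorem 5.5]{Bro74} and used as input for the proof of Theorem~\ref{immersions} that immediately follows. There is therefore no proof in the paper to compare your proposal against.

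That said, your direct-computation sketch is on the right track, and you correctly identify its weak point. The decomposition of the Serre generators into clean pairs and exceptional singletons is accurate, as is the basic input $H(\Z_2[u,v],d)=\Z_2[u^2]$. The genuine difficulty, which you flag, is that the exceptional generators couple different tensor factors together (since $Sq^1(C)$ lands in the subalgebra generated by a \emph{different} Serre generator), so the K\"unneth argument does not apply cleanly; in particular, your treatment does not yet explain what happens to the classes $\{A^2\}$ coming from clean pairs with $A$ odd-dimensional, which must be killed by suitable exceptional $C$'s before the stated answer can emerge. Carrying the assembly through requires organizing the generators into larger acyclic or almost-acyclic subcomplexes and checking that what remains is exactly the polynomial algebra on the $\{G^2\}$ and their $d_{(2)}$-images; this is doable but delicate, and your proposal stops short of doing it.

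Your closing remark is apt: Browder's argument in \cite{Bro74} does proceed via the path--loop fibration and general results on how the Bockstein spectral sequence interacts with fibrations, rather than by direct combinatorics on Serre's basis.
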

\begin{proof}[Proof of Theorem \ref{immersions}]
Let $J=(j_1,\ldots, j_s)$ be an admissible sequence of excess less than $k$ such that $j_1\neq 1$ and $|J|+k$ is even. Then the sequence $I=(|J|+k,j_1,\ldots , j_s)$ is admissible of excess $k$, and it follows from Theorem \ref{E2} and Lemma \ref{diff} that the element $$y= Sq^I(\iota_k) = \left[Sq^J(\iota_k)\right]^2 $$ is not the reduction of an integral class.

For example, if $k$ is even we can set $J=(0)$ and then $Sq^I(\iota_k)=Sq^k(\iota_k)=\iota_k^2$.

If $k$ is odd we can set $J=(2,1)$ and then $Sq^I(\iota_k) = Sq^{k+3}Sq^2Sq^1(\iota_k)$.

Now we can construct smooth examples by the method of thickenings, as follows. If $k$ is even, take the $(2k+2)$-skeleton $K=K^{(2k+2)}\subseteq K(\Z_2,k)$, whose cohomology is isomorphic to that of $K(\Z_2,k)$ up to degree $2k+1$, and choose an embedding $K\hookrightarrow\R^{n+1}$ in some high-dimensional euclidean space. This embedding has a regular neighbourhood $W\supset K$, homotopy equivalent to $K$, whose boundary $\partial W$ is a closed smooth $n$-manifold. By an easy argument involving Lefschetz duality, one sees that the map
\begin{equation}
H^{2k+1}(K;\Z)\cong H^{2k+1}(W;\Z)\to H^{2k+1}(\partial W;\Z)
\end{equation}
is injective as long as $n\geq 4k +3$. Thus the image $x_k$ of $\iota_k$ in $H^k(\partial W)$ has the property that $\beta(x_k^2)\neq 0$, and so cannot be realized by an immersion. The smallest dimensional example obtained by this method is a $2$-dimensional class in a closed $11$-manifold.

If $k$ is odd, the same technique produces a class $x_k$ in a $(4k+15)$-dimensional manifold such that $\beta Sq^{k+3}Sq^2 Sq^1(x_k)$ is nonzero, and hence $x_k$ cannot be realized by an immersion. The smallest dimensional example obtained by this method is a $3$-dimensional class in a closed $27$-manifold.
\end{proof}

\section{Interpretation of the obstruction $\beta(x^2)$}

When $k$ is even, the obstruction $\beta(x^2)\in H^{2k+1}(N;\Z)$ to a class $x\in H^k(N)$ being realizable by an immersion has a very natural interpretation: it is the integer class realized by the singular set of any stable map realizing $x$. In order to make this precise, we need several preliminaries.

Let $f\co (M,\partial M)\to (N,\partial N)$ be a codimension $k$ map between compact manifolds, where $\partial M$ and $f(M)\cap \partial N$ are assumed empty. Denote the virtual normal bundle by $\nu_f = f^*TN-TM$. It is well-known that, for any local system $\mathscr{L}$ of abelian groups on $N$, the map $f$ induces a {\em Gysin homomorphism}
\begin{equation}
f_!\co H^*(M;f^*\mathscr{L}\otimes\Z_f)\to H^{*+k}(N,\partial N;\mathscr{L}),
\end{equation}
where $\Z_f$ denotes the local system of integers twisted by $w_1(\nu_f)$ (for details, see \cite{OSS}). In particular, when $\mathscr{L}$ is the trivial system with group $\Z$ or $\Z_2$, we get Gysin homomorphisms
\begin{equation}
f_!\co H^*(M;\Z_f)\to H^{*+k}(N,\partial N;\Z)\qquad\mbox{and}\qquad f_!\co H^*(M)\to H^{*+k}(N,\partial N).
\end{equation}
With these notations, the mod $2$ cohomology class $x\in H^k(N,\partial N)$ realized by $f$ is just $f_!(1)$, where $1\in H^0(M)$ is the unit class.

A well-known result of Thom \cite{Th50} (see also \cite{EG11}) states that $f_!\big(w_i(\nu_f)\big)=Sq^i\big(f_!(1)\big)$ when $f\co M\imm N$ is an immersion with $M$ closed. The following Lemma generalises Thom's result to singular maps and integer coefficients.
\begin{lemma}\label{squares}
Let $x\in H^k(N)$ be a cohomology class realized by a map of closed manifolds $f\co M^{n-k}\to N^n$. Let $W_r(\nu_f)$ denote the $r$-th Stiefel-Whitney class of the virtual normal bundle of $f$ (with coefficents in $\Z_2$ for $r$ even, and twisted coefficients in $\Z_f$ for $r$ odd). Then
\begin{equation}
f_!\big(W_{2i}(\nu_f)\big) = Sq^{2i}(x)\quad\mbox{and}\quad f_!\big(W_{2i+1}(\nu_f)\big)=\beta Sq^{2i}(x).
\end{equation}
\end{lemma}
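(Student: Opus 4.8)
The plan is to reduce both identities to the behaviour of the Thom isomorphism under Steenrod squares and Bocksteins, by factoring $f$ through an embedding. (Note that $W_{2i}(\nu_f)=w_{2i}(\nu_f)$ is simply the ordinary mod $2$ Stiefel--Whitney class.) Choose an embedding $e\co M\hookrightarrow N\times\R^L$ for $L$ large, with tubular neighbourhood the total space of the normal bundle $\nu_e$; since the composite of $e$ with the projection $N\times\R^L\to N$ is $f$ and $e^*T(N\times\R^L)=TM\oplus\nu_e$, we have $\nu_e\cong\nu_f\oplus\varepsilon^L$. In particular $w(\nu_e)=w(\nu_f)$ and $w_1(\nu_e)=w_1(\nu_f)$, so the orientation local system of $\nu_e$ coincides with $\Z_f$ and stabilising to $e$ does not disturb the twisting. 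Under these identifications the Gysin homomorphism $f_!$ is the composite of a Thom isomorphism $\phi$, the homomorphism induced by the Pontryagin--Thom collapse $\Sigma^L N_+\to\mathrm{Th}(\nu_e)$, and an $L$-fold suspension isomorphism; this factorisation, the projection formula, and the naturality of $f_!$ in the coefficients are all standard (see \cite{OSS}).

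For the case $r=2i$ even I would use the Wu formula $Sq^i(U)=\phi\big(w_i(\nu_e)\big)$ for the mod $2$ Thom class $U$ of $\nu_e$, which gives $Sq\big(\phi(a)\big)=\phi\big(Sq(a)\cdot w(\nu_e)\big)$ for all $a\in H^*(M)$, with $Sq=\sum_i Sq^i$ and $w=\sum_i w_i$ the total square and total Stiefel--Whitney class. As $Sq$ is natural and stable it commutes with the collapse map and with suspension, so passing through the factorisation of $f_!$ yields the Riemann--Roch identity $Sq\big(f_!(a)\big)=f_!\big(Sq(a)\cdot w(\nu_f)\big)$. Taking $a=1$ and recalling $x=f_!(1)$ gives $Sq(x)=f_!\big(w(\nu_f)\big)$; comparing components of degree $k+2i$ gives $f_!\big(w_{2i}(\nu_f)\big)=Sq^{2i}(x)$, and comparing those of degree $k+2i+1$ gives the auxiliary mod $2$ identity $f_!\big(w_{2i+1}(\nu_f)\big)=Sq^{2i+1}(x)$. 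This is Thom's argument from \cite{Th50} (see also \cite{EG11}); the only new point is that it applies verbatim to singular $f$ once the Gysin homomorphism is available.

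For the case $r=2i+1$ odd, recall that the twisted integral class $W_{2i+1}(\nu_f)\in H^{2i+1}(M;\Z_f)$ is by definition the image of $w_{2i}(\nu_f)$ under the Bockstein $\tilde\beta$ of the coefficient sequence $0\to\Z_f\stackrel{\cdot2}{\to}\Z_f\to\Z_2\to0$. The Thom isomorphism is natural in the coefficients, and under it this twisted sequence on $M$ corresponds to the untwisted sequence $0\to\Z\stackrel{\cdot2}{\to}\Z\to\Z_2\to0$ on $\mathrm{Th}(\nu_e)$ --- indeed $\tilde H^{*+k+L}(\mathrm{Th}(\nu_e);\Z)\cong H^*(M;\Z_f)$, the twist in $\nu_e$ being exactly what allows the integer Thom class to exist with $\Z_f$-coefficients on the base. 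It follows that $f_!$ intertwines $\tilde\beta$ on $M$ with the ordinary Bockstein $\beta$ on $N$, that is, $\beta\circ f_!=f_!\circ\tilde\beta$ as homomorphisms $H^{2i}(M)\to H^{k+2i+1}(N;\Z)$. Combining this with the even case,
\[
f_!\big(W_{2i+1}(\nu_f)\big)=f_!\big(\tilde\beta(w_{2i}(\nu_f))\big)=\beta\big(f_!(w_{2i}(\nu_f))\big)=\beta Sq^{2i}(x),
\]
as claimed. As a consistency check, reducing mod $2$ and using $\rho\circ\beta=Sq^1$, $Sq^1Sq^{2i}=Sq^{2i+1}$, and the standard identification of the mod $2$ reduction of $W_{2i+1}(\nu_f)$ with $w_{2i+1}(\nu_f)$ recovers the auxiliary identity above.

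I expect the technical heart of the argument to be the careful handling of the local coefficient systems: identifying $\Z_f$ with the orientation system of $\nu_f$, verifying that stabilising to the embedding $e$ preserves this twisting, and --- the key point --- formulating the Thom isomorphism with twisted integer coefficients so that its naturality in the coefficients delivers the compatibility $\beta\circ f_!=f_!\circ\tilde\beta$. Once this is in place the rest is formal: the only genuinely geometric input is Thom's Wu-formula computation for the even case, with everything else following from properties of the Gysin homomorphism recorded in \cite{OSS}.
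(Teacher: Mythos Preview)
Your argument is correct, and for the even case it is essentially the paper's: both amount to the Wu--Thom identity $w_i\cdot U=Sq^i(U)$ pulled back along an embedding obtained by stabilising $f$.

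For the odd case your route genuinely differs. The paper works universally in $MO(k+q)$: it uses Brown's description of $H^*(BO(k);\Z)$ to see that $\tilde H^{k+q+2i+1}(MO(k+q);\Z)$ is $2$-torsion for a suitable choice of $q$, so that the integral identity ${g_u}_!(W_{2i+1})=\beta Sq^{2i}(U_{k+q})$ can be verified after reduction mod $2$, where it becomes the classical $w_{2i+1}\cdot U=Sq^{2i+1}(U)$ again. You instead take $W_{2i+1}(\nu_f)=\tilde\beta\,w_{2i}(\nu_f)$ as the definition of the twisted integral class and argue that the Gysin map intertwines the twisted Bockstein on $M$ with the untwisted one on $N$, because the mod $2$ Thom class lifts to the twisted integral Thom class; then the odd identity follows formally from the even one. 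Your approach is cleaner in that it avoids any appeal to the torsion structure of $H^*(BO;\Z)$ and makes the role of the twisting transparent; the paper's approach has the virtue of bypassing the local-coefficient bookkeeping entirely, at the cost of importing a nontrivial structural fact about $H^*(MO;\Z)$ (already used in the proof of Theorem~\ref{mod2}). One caveat: you are taking $W_{2i+1}=\tilde\beta\,w_{2i}$ as a definition, whereas the paper later (in Proposition~\ref{singset}) uses $W_{k+1}(\nu_f)$ as an obstruction class; these agree, but if you adopt your definition you should note this compatibility when it is needed.
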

\begin{proof}
We use stabilisation to reduce to the embedded case. Let $e\co (N,\emptyset)\to (N\times D^q,N\times S^{q-1})$ be the standard embedding, where $D^q$ is a large dimensional disk with boundary $S^{q-1}$. It is well known that $e_!$ can be identified with the $q$-fold suspension isomorphism, for any coefficients, and in particular $e_!$ commutes with stable cohomology operations. The composition $e\circ f$ is homotopic to an embedding $g\co (M,\emptyset)\hookrightarrow (N\times D^q,N\times S^{q-1})$ with image in the interior of $N\times D^q$ and such that $W(\nu_g)=W(\nu_f)$. Since $e_!\circ f_! = g_!$ the claimed equalities are equivalent to
\begin{equation}
g_!\big(W_{2i}(\nu_g)\big) = Sq^{2i}\big(g_!(1)\big)\quad\mbox{and}\quad g_!\big(W_{2i+1}(\nu_g)\big)=\beta Sq^{2i}\big(g_!(1)\big).
\end{equation}

 We will obtain these equalities by showing that they hold for the universal embedding $g_u\co BO(k+q)\to MO(k+q)$ of codimension $k+q$, then applying naturality arguments. (Here we are abusing notation slightly, since the universal embedding is really a map $g_u\co (BO(k+q),\emptyset)\to (D\gamma_{k+q},S\gamma_{k+q})$ which is obtained as a limit of codimension $k+q$ embeddings of finite dimensional manifolds.)

Let $W_{2i+1}=W_{2i+1}(\gamma_{k+q})$ be the universal Stiefel-Whitney class. We wish to show that
\begin{equation}\label{universal}
{g_u}_!(W_{2i+1})=\beta Sq^{2i}(U_{k+q})\in \tilde{H}^{k+q+2i+1}\big(MO(k+q);\Z\big).
\end{equation}
By the results stated in the proof of Theorem \ref{mod2} at the end of Section 2, we have that $\tilde{H}^{*}(MO(k+q);\Z)\subseteq H^*(BO(k+q);\Z)$, and the latter group is $2$-torsion in all dimensions not divisible by $4$. Hence by choosing $q$ so that $k+q+2i+1\not\equiv 0 (\mathrm{mod} 4)$ we can ensure that the mod $2$ reduction
\begin{equation}
\rho\co \tilde{H}^{k+q+2i+1}\big(MO(k+q);\Z\big)\to \tilde{H}^{k+q+2i+1}\big(MO(k+q)\big)
\end{equation}
is injective. Using that the reduction mod $2$ of ${g_u}_!$ can be identified with the universal Thom isomorphism, and the identities $\rho\circ\beta = Sq^1$ and $Sq^1 Sq^{2i} = Sq^{2i+1}$, we see that the mod $2$ reduction of equality (\ref{universal}) becomes the well-known identity $w_{2i+1}\cdot U_{k+q} = Sq^{2i+1}(U_{k+q})$. Hence equality (\ref{universal}) holds.

The equality ${g_u}_!(W_{2i})= Sq^{2i}(U_{k+q})$ is immediate, since $W_{2i}\cdot U_{k+q} = Sq^{2i}(U_{k+q})$.

Let $T$ denote a closed tubular neighbourhood of $g$ in the interior of $N\times D^q$. By universality we obtain a transversal pullback square
\begin{equation}
\xymatrix{
(M,\emptyset) \ar[d]^{g} \ar[r]^-{\nu_g} & \big(BO(k+q),\emptyset\big) \ar[d]^{g_u} \\
(T,\partial T) \ar[r]^-j & (D\gamma_{k+q},S\gamma_{k+q})
}
\end{equation}
from which it follows that in $H^*(T,\partial T;\Z)$ we have
\begin{align*}
g_!\big(W_{2i+1}(\nu_g)\big)   & = g_! \circ \nu_g^*(W_{2i+1}) \\
                       & = j^* \circ {g_u}_!(W_{2i+1}) \\
                       & = j^* \beta Sq^{2i}(U_{k+q}) \\
                       & = \beta Sq^{2i}\big(j^*(U_{k+q})\big) \\
                       & = \beta Sq^{2i}\big(g_!(1)\big).
\end{align*}
Applying the excision isomorphism $H^*(T,\partial T;\Z)\cong H^*(N\times D^q,N\times D^q\setminus\operatorname{int}(T))$ composed with the restriction $H^*(N\times D^q,N\times D^q\setminus\operatorname{int}(T))\to H^*(N\times D^q, N\times S^{q-1})$ yields the desired equality in the odd case. An analogous argument applies to the even case also.
\end{proof}

Now let $x\in H^k(N)$, with $k$ even. Suppose that $x$ can be realized by a stable map $f\co M^{n-k}\to N^n$ of closed manifolds. The {\em singular set} of $f$ is defined as
\begin{equation}
\Sigma = \lbrace x\in M \mid \mathrm{rank}(df_x\co TM_x\to TN_{f(x)}) < n-k\rbrace \subseteq M.
\end{equation}
It is the closure in $M$ of the simplest singularity stratum $\tilde\Sigma = \Sigma^{1,0}\subseteq M$ of Whitney umbrella points. This top stratum $\tilde{\Sigma}$ is an open dense subset of $\Sigma$, and has codimension $k+1$ in $M$. All other strata have codimension at least $k+4$ in $M$. It is well known that $\Sigma$ carries a fundamental class
\begin{equation}
[\Sigma]\in H_{n-2k-1}(\Sigma;\Z_{\tilde{\Sigma}})
\end{equation}
(the coefficients are twisted according to the tangent bundle of $\tilde\Sigma$; the other strata do not affect the orientability). Denote by $\imath\co \Sigma\to M$ the inclusion, and by $\bar{f} = f\circ \imath\co \Sigma \to N$ the restriction of $f$ to its singular set.
\begin{prop}\label{singset}
 Let $x\in H^k(N)$ be realized by a stable map $f\co M^{n-k}\to N^n$ of closed manifolds, where $k$ is even. Then $\beta(x^2)\in H^{2k+1}(N;\Z)$ is the cohomology class in $N$ realized by the singular set of $f$. In other words, $\beta(x^2)$ is the Poincar\'e dual of
 \begin{equation}
 \bar{f}_*[\Sigma] \in H_{n-2k-1}(N;\Z_N).
 \end{equation}
\end{prop}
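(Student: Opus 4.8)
The plan is to combine Lemma~\ref{squares} with the computation of the Thom polynomial of the Whitney umbrella singularity, refined to twisted integer coefficients. Since $k$ is even, $Sq^k(x)=x^2$ for a degree $k$ class $x$, so Lemma~\ref{squares} with $2i=k$ gives
\begin{equation}\label{eq:lhs}
f_!\bigl(W_{k+1}(\nu_f)\bigr)=\beta Sq^k(x)=\beta(x^2)\in H^{2k+1}(N;\Z).
\end{equation}
On the other hand $\bar f_*[\Sigma]=f_*\bigl(\imath_*[\Sigma]\bigr)$, and the Gysin homomorphism is defined so that $PD_N\circ f_*=f_!\circ PD_M$. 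Thus Proposition~\ref{singset} follows once we establish the local identity
\begin{equation}\label{eq:tp}
PD_M\bigl(\imath_*[\Sigma]\bigr)=W_{k+1}(\nu_f)\in H^{k+1}(M;\Z_f).
\end{equation}
A point to settle first is why the left-hand side of \eqref{eq:tp} takes coefficients in $\Z_f$. A short computation with the virtual normal bundle (using that $k$ is even, so $k+1$ is odd) shows $w_1(T\tilde\Sigma)=\bar f^*w_1(TN)$, i.e.\ the normal bundle of $\tilde\Sigma$ in $N$ along $\bar f$ is orientable; hence the twisted fundamental class $[\Sigma]\in H_{n-2k-1}(\Sigma;\Z_{\tilde\Sigma})$ pushes forward into $H_{n-2k-1}(M;f^*\Z_N)$ and so dualises into $H^{k+1}(M;\Z_f)$. (This is also what makes the statement of Proposition~\ref{singset} meaningful.)

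To prove \eqref{eq:tp} I would resolve the corank-one degeneracy locus in the standard way. In the projectivised tangent bundle $\pi\co\mathbb{P}(TM)\to M$, let $\sigma$ be the section of $\mathcal{O}(1)\otimes\pi^*f^*TN$ obtained by restricting $df$ to the tautological line. Because $f$ is stable, its $1$-jet is transverse to the Thom--Boardman strata, so $Z:=\sigma^{-1}(0)$ is a closed submanifold of $\mathbb{P}(TM)$ of codimension $n$ with normal bundle $\mathcal{O}(1)\otimes\pi^*f^*TN$, and $\pi|_Z\co Z\to\Sigma$ has generic multiplicity one: the fibre over $p\in\tilde\Sigma$ is the single line $\ker df_p$, while over the deeper strata $\Sigma^{\ge2}$ (codimension $\ge k+4$ in $M$) the fibres are positive-dimensional projective spaces, too small to affect the fundamental class. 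Hence $\imath_*[\Sigma]=\pi_*[Z]$, so
\begin{equation}
PD_M\bigl(\imath_*[\Sigma]\bigr)=\pi_!\,e\bigl(\mathcal{O}(1)\otimes\pi^*f^*TN\bigr),
\end{equation}
with $e$ the twisted-integral Euler class and $\pi_!$ the Gysin map of the projective bundle. Expanding the Euler class in powers of the hyperplane class $t=w_1(\mathcal{O}(1))$ and using the projective-bundle formula $\pi_!\bigl(t^{\,n-k-1+j}\bigr)=\bar W_j(TM)$ for the (twisted) Segre classes collapses the right-hand side to $\sum_i W_i(f^*TN)\,\bar W_{k+1-i}(TM)=W_{k+1}(\nu_f)$; mod $2$ this is the classical Porteous/Ronga formula giving the Thom polynomial of $\Sigma^1$ as $w_{k+1}(\nu_f)$.

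In my view the real work is entirely in the twisted-coefficient bookkeeping of the previous paragraph: one must track the orientation local systems of $M$, of $TN$, of the fibre $\mathbb{P}^{n-k-1}$ and of $\mathcal{O}(1)$ through $\mathbb{P}(TM)$, the Euler class, and both Gysin maps, and check that the expansion $e\bigl(\mathcal{O}(1)\otimes\pi^*f^*TN\bigr)=\sum_i W_i(f^*TN)\,t^{\,n-i}$ and the projective-bundle formula hold verbatim at the twisted-integral level. This is delicate but not deep; what it cannot be is reduced to the mod $2$ statement, since $\beta(x^2)$ is $2$-torsion and $\deg(x^2)$ is even, so its reduction $Sq^1(x^2)$ vanishes and $w_{k+1}(\nu_f)=\rho W_{k+1}(\nu_f)$ lies in $\ker f_!$ (indeed $f_!w_{k+1}(\nu_f)=Sq^{k+1}(x)=0$): the entire content of the proposition is invisible in mod $2$ cohomology.
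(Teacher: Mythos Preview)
Your argument is correct and follows the paper's overall strategy: both reduce the proposition, via Lemma~\ref{squares} with $2i=k$, to the twisted Thom-polynomial identity $PD_M(\imath_*[\Sigma])=W_{k+1}(\nu_f)\in H^{k+1}(M;\Z_f)$, and both first verify the orientation statement needed to make sense of this (your $w_1(T\tilde\Sigma)=\bar f^*w_1(TN)$ is equivalent to the paper's claim that $\nu_{\tilde f}$ is oriented, which the paper establishes via the explicit normal form of the umbrella and the parity of $k$).

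Where you diverge is in how you prove that identity. The paper gives a one-step obstruction-theoretic argument: it views $df$ as a generic section of $\xi=\hom(TM,f^*TN)$, so that $\Sigma=df^{-1}(S)$ where $S\subset E(\xi)$ is the stratified set of non-injective maps; the complement of $S$ is an associated bundle with fibre the Stiefel manifold $V_{n-k}(\R^n)$, whose primary obstruction class is exactly $W_{k+1}(\nu_f)$, and the dual of a generic section's preimage is that obstruction class. You instead resolve the corank-one locus by passing to $\mathbb{P}(TM)$, taking the Euler class of $\mathcal{O}(1)\otimes\pi^*f^*TN$, and pushing down via the projective-bundle formula --- in other words you run the Porteous/Ronga computation at the twisted-integral level. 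Your route is more computational and makes the formula $\sum_i W_i(f^*TN)\,\bar W_{k+1-i}(TM)=W_{k+1}(\nu_f)$ explicit, at the price of the projective-bundle and local-system bookkeeping you flag; the paper's route is shorter but ends with an appeal to ``standard obstruction theoretic arguments'', so the two proofs leave roughly the same amount of detail to the reader. Your closing observation that the whole statement is invisible mod~$2$ (since $Sq^{k+1}(x)=0$ for degree reasons and $Sq^1(x^2)=0$) is a worthwhile sanity check that the paper does not make.
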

\begin{proof}
Let $\jmath\co \tilde{\Sigma}\to M$ be the inclusion of the set of Whitney umbrella points, and denote by $\tilde{f} = f\circ\jmath$ the restriction of $f$ to $\tilde{\Sigma}$. Hence $\tilde{f}$ is an immersion, whose normal bundle $\nu_{\tilde{f}}\cong\nu_{\jmath}\oplus \jmath^*\nu_f$ we claim is oriented, regardless of the orientablility of $M$, $N$ and $\nu_f$. This follows from arguments in \cite{Sz} (see also \cite{R-Sz} for a more sophisticated approach). The umbrella points have normal form
\begin{equation}
(x,y)\mapsto (u,v,w) = (x^2,y,xy),\qquad (x,u\in\R^1,\quad y,v,w\in\R^k).
\end{equation}
In a tubular neighbourhood of $\tilde{\Sigma}$, the normal co-ordinates can be chosen so that the transition maps have the form
\begin{equation}
(x,y)\mapsto (\varepsilon x, A\cdot y),\qquad \varepsilon = \pm 1,\quad A\in \mathrm{GL}(k,\R).
\end{equation}
The corresponding transition maps in the target will be
\begin{equation}
(u,v,w)\mapsto (u, A\cdot v,\varepsilon A\cdot w).
\end{equation}
Since $k$ is even, it is clear that the transition matrices in the target all have positive determinant.

It follows that $w_1(\nu_{\jmath}) = \jmath^*w_1(\nu_f)$, and hence that $w_1(\tilde{\Sigma}) = \jmath^*(w_1(M) + w_1(\nu_f))$. Hence our fundamental class $[\Sigma]\in H_{n-2k-1}(\Sigma;\Z_{\tilde{\Sigma}})$ can be viewed as an element in the group in the upper-left of the commutative diagram,
\begin{equation}\label{diagram}
\xymatrix{
H_{n-2k-1}(\Sigma;\imath^*(\Z_f\otimes\Z_M)) \ar[r]^-{\imath_*} & H_{n-2k-1}(M;\Z_f\otimes\Z_M)  \ar[r]^-{f_*} & H_{n-2k-1}(N;\Z_N)  \\
                                                      & H^{k+1}(M;\Z_f) \ar[u]^{\cap [M]} \ar[r]^-{f_!} & H^{2k+1}(N;\Z) \ar[u]^{\cap [N]},
}
\end{equation}
in which the vertical arrows are Poincar\'e duality isomorphisms. Together with Lemma \ref{squares}, diagram (\ref{diagram}) reduces the proof of the Proposition to the statement that $W_{k+1}(\nu_f)\cap[M] = \imath_*[\Sigma]$. That is, {\em the Poincar\'e dual in $M$ of the homology class realized $\Sigma$  is the $(k+1)$-st Stiefel-Whitney class of the virtual normal bundle of $f$}.

We can see this as follows. Let $\xi = \hom(TM,f^*TN)$ be the vector bundle over $M$ whose fibre over a point $x\in M$ is the space of linear maps $TM_x\to TN_{f(x)}$. Let $S_x\in \xi_x$ be the subspace consisting of those maps with non-trivial kernel, and let $S=\bigcup_{x\in M} S_x$. Note that $S\subseteq E(\xi)$ is a stratified subset of the total space of $\xi$, and that by restricting to its complement we obtain a fibration $\zeta\co \big(E(\xi) \setminus S\big)\to M$, with fibre $V_{n-k}(\R^n)$ the Stiefel manifold of $(n-k)$-frames in $\R^n$, associated to $\xi$. The obstruction class of this $\zeta$ is precisely the twisted integer Stiefel-Whitney class $W_{k+1}(\nu_f)$.

On the other hand, the differential of $f$ defines a generic section $df\co M\to E(\xi)$ of $\xi$, and it is clear that $\Sigma = df^{-1}(S)$. Hence the homology class realized by $\Sigma$ is dual to the obstruction class of $\zeta$, by standard obstruction theoretic arguments.
\end{proof}

\section{Classifying space for $\tau$-maps}

In this section we recall some details of the classifying space $X_\tau$ for $\tau$-maps, constructed in \cite{R-Sz}, which will be needed for the proof of Theorem \ref{taumaps} in the next section.

We first recall some terminology from singularity theory. Fix a codimension $k>1$. A {\em multi-singularity} of codimension $k$ is defined to be a multiset of stable local singularities of codimension $k$. Here a {\em local singularity} of codimension $k$ is an equivalence class of map germs $\eta\co(\R^{n-k},\mathbf{0})\to (\R^{n},\mathbf{0})$, under the relations generated by {\em suspension} and $A$-equivalence. We refer the reader to \cite{R-Sz} for more details. By abuse of notation, we will denote the local singularity $[\eta]$ by $\eta$.

   Let $\tau$ be a finite set of multi-singularities of codimension $k$. Recall that a {\em $\tau$-map} is a stable map $f\co M^{n-k}\to N^n$ such that at each point $y\in N$ the pre-image $f^{-1}(y)\subseteq M$ is finite, and the multiset of local singularities of $f$ at the points in the pre-image is an element of $\tau$. In \cite{R-Sz} the authors construct a classifying space $X_\tau$ for $\tau$-maps. This is a pointed space with the property that the cobordism classes of $\tau$-maps $f\co M^{n-k}\to N^n$, with $M$ closed, are in one-to-one correspondence with the set of pointed homotopy classes $[N_+,X_\tau]$ when $N$ is a closed manifold.

    The space $X_\tau$ is $(k-1)$-connected, and its lowest dimensional nonzero cohomology group has a single generator $U_\tau\in H^k(X_\tau)$, represented by a map $U_\tau\co X_\tau\to K(\Z_2,k)$, which plays the role of the Thom class in the following sense. Whenever $f\co M^{n-k}\to N^n$ is a $\tau$-map, and $F\co N_+\to X_\tau$ is the corresponding map into the classifying space, then $F^*(U_\tau)\in H^k(N)$ is the cohomology class realized by $f$. Thus we have the following result, analogous to Proposition \ref{Wells}.

\begin{prop}\label{Wellstau}
The cohomology class $x\in H^k(N)$ is realizable by a $\tau$-map if, and only if, there exists a map $F\co N_+\to X_\tau$ such that $x=F^*(U_\tau)$.
\end{prop}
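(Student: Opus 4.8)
The plan is to deduce the statement directly from two inputs already in place: the classification theorem of Rim\'anyi and the second author \cite{R-Sz}, asserting that cobordism classes of $\tau$-maps $f\co M^{n-k}\to N^n$ with $M$ closed are in bijection with $[N_+,X_\tau]$; and the stated property of the universal class $U_\tau\in H^k(X_\tau)$, namely that if $F\co N_+\to X_\tau$ corresponds to a $\tau$-map $f$ then $F^*(U_\tau)\in H^k(N)$ is the cohomology class realized by $f$. Exactly as in the passage from the Pontrjagin--Thom--Wells theorem to Proposition \ref{Wells}, the proposition is then formal.

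For the forward implication, suppose $x\in H^k(N)$ is realized by a $\tau$-map $f\co M^{n-k}\to N^n$ with $M$ closed. By \cite{R-Sz} the cobordism class of $f$ determines a pointed homotopy class $[F]\in[N_+,X_\tau]$, and by the defining property of $U_\tau$ recalled above, $F^*(U_\tau)=x$, which is what is required. For the converse, suppose $F\co N_+\to X_\tau$ satisfies $x=F^*(U_\tau)$. Running the bijection of \cite{R-Sz} in the other direction, $[F]$ corresponds to the cobordism class of some $\tau$-map $f\co M^{n-k}\to N^n$ with $M$ closed; any representative $f$ realizes $F^*(U_\tau)=x$. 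Here one uses that the realized cohomology class is a cobordism invariant of $\tau$-maps into $N$, which is immediate since a cobordism of $\tau$-maps into $N\times[0,1]$ exhibits a homology between the classes realized by its two ends.

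I do not expect a genuine obstacle: the only point requiring care is the compatibility of the two cited facts, i.e.\ that under the bijection of \cite{R-Sz} the realized class is precisely the pullback of $U_\tau$. This is built into the construction of $X_\tau$ (the map $U_\tau\co X_\tau\to K(\Z_2,k)$ is defined so as to detect the realized class), so it can simply be quoted. In the write-up I would state the two implications as above and refer to \cite{R-Sz} for both the classification and the properties of $U_\tau$, keeping the argument to a single short paragraph.
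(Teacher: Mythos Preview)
Your argument is correct: once one accepts the classification theorem of \cite{R-Sz} and the stated property that $F^*(U_\tau)$ is the class realized by the corresponding $\tau$-map, the proposition is indeed a formal consequence, and your handling of the two implications (including the remark that the realized class is a cobordism invariant) is clean.

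The paper takes a somewhat different route. Rather than invoking the full classification and the property of $U_\tau$ as black boxes, it indicates that the result can be deduced from Proposition~\ref{Wells} (the immersion case) together with the fact that the singular set of a codimension $k$ map has codimension $k+1$ in the source and $2k+1$ in the target. The idea behind this hint is presumably that the lowest cell of $X_\tau$ is a copy of $MO(k)$ (corresponding to the regular-point stratum), that $U_\tau$ restricts to $U_k$, and that the higher strata attach in dimensions at least $2k+1$ and so do not disturb $H^k$; geometrically, away from its singular set a $\tau$-map is an immersion realizing the same class, and the codimension estimate ensures this determines the class. Your approach is more direct and packages everything into the two quoted facts from \cite{R-Sz}; the paper's approach is more self-contained in that it reduces to the already-established Proposition~\ref{Wells} and explains \emph{why} $U_\tau$ behaves like a Thom class, rather than simply asserting it. Either is acceptable, but you might note that the compatibility you flag in your final paragraph is exactly what the paper's codimension remark is designed to justify.
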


We omit the details of the proof. The result can be deduced from Proposition \ref{Wells} and the fact that the singularity set of a codimension $k$ map has
codimension $(k+1)$ in the source and $(2k+1)$ in the target.

The space $X_\tau$ is constructed by inductively glueing together disc bundles of vector bundles, one for each multi-singularity in $\tau$. This requires a well-ordering on $\tau$, which is obtained by extension of the following partial ordering on the set of codimension $k$ multi-singularities. We can denote a typical multi-singularity by $\theta = m_1\eta_1 + \cdots + m_s\eta_s$ where $m_1,\ldots , m_s$ are natural numbers and $\eta_1,\ldots,\eta_s$ are local singularities. Let $f\co M^{n-k}\to N^n$ be a stable mapping. A point $y\in N$ is called a {\em $\theta$-point} if the pre-image $f^{-1}(y)$ is finite, and the multiset of local singularities of $f$ at the points in the pre-image is precisely $\theta$. We then say that $\theta<\theta'$, where $\theta'$ is some other multi-singularity, if in any neighbourhood of a $\theta'$-point there is always to be found a $\theta$-point.

\begin{exam}
Using the Thom-Boardman classification of singularities, a regular $r$-tuple point would be a $r\Sigma^0$-point, and a Whitney umbrella would be a $\Sigma^{1,0}$-point. We then have $2\Sigma^0<3\Sigma^0$ and $2\Sigma^0<\Sigma^{1,0}$, but $3\Sigma^0$ is not comparable with $\Sigma^{1,0}$.
\end{exam}

We next describe the vector bundle associated to a given multi-singularity $\theta= m_1\eta_1 + \cdots + m_s\eta_s$ in $\tau$. Each local singularity $\eta_i$ has an associated vector bundle $\tilde\xi_i$, which is the universal normal bundle of the (simple) $\eta_i$ stratum in the target. The base space of $\tilde\xi_i$ is the classifying space $BG_i$, where $G_i$ is a compact Lie group. In fact, $G_i$ is the maximal compact subgroup (in the sense of
J\"anich \cite{Jan} and Wall \cite{Wal}) of the group of symmetries of $\eta_i$.

Let $r$ be a natural number, and let $S_r$ be the symmetric group on $r$ letters. If $\xi\to B$ is a vector bundle, the {\em $r$-th extended power} of $\xi$ is the bundle
\begin{equation}\label{extpower}
\mathcal{S}_r(\xi) = ES_r\times_{S_r} \left(\xi\times\cdots \times \xi\right)
 \end{equation}
    which is the Borel construction applied to the $r$-fold Cartesian power of $\xi$, with $S_r$-action given by permutation of the factors (here $ES_r$ denotes a free contractible $S_r$-space). It has Thom space $D_rT\xi$, where, for a pointed space $X$ and $r\geq 1$, the notation $D_rX$ refers to the {\em $r$-adic construction}
\begin{equation}
D_rX = (ES_r)_+ \wedge_{S_r}X\wedge\cdots\wedge X.
\end{equation}
The vector bundle associated to $\theta = m_1\eta_1 + \cdots + m_s\eta_s$ is
\begin{equation}
\tilde{\xi}_\theta = \mathcal{S}_{m_1}(\tilde\xi_1)\times\cdots\times\mathcal{S}_{m_s}(\tilde\xi_s).
\end{equation}
Note that the Thom space of this bundle is the smash product
\begin{equation}\label{Thomspace}
T\tilde{\xi}_\theta = \bigwedge_{i=1}^s D_{m_i}T\tilde\xi_i.
\end{equation}

To construct the space $X_\tau$ one starts with a point, then at each stage glues the disc bundle $D\tilde{\xi}_\theta$ of the next multi-singularity $\theta$ appearing in $\tau$ to the space obtained at the previous stage, by a map defined on the sphere bundle $S\tilde{\xi}_\theta$.

\section{Proof of Theorem \ref{taumaps}}

Let $\tau$ be a finite set of multi-singularities. By the construction outlined in the previous section, the classifying space $X_\tau$ comes with a natural filtration, and an associated spectral sequence converging to the cohomology ring $H^*(X_\tau)$. In this section, by examining the $E_2$-term of this spectral sequence, we prove the following.

\begin{prop}\label{Hilbert}
The dimensions of the cohomology groups of $X_\tau$ (viewed as vector spaces over $\Z_2$) grow not faster than those of a finitely generated $\Z_2$-algebra.
\end{prop}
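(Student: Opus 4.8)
The plan is to use the finite filtration of $X_\tau$ coming from its inductive construction in \cite{R-Sz}, and to bound the cohomology of each successive quotient separately. Write the well-ordered set $\tau$ as $\theta_1<\theta_2<\cdots<\theta_N$ with $N=|\tau|<\infty$, and let $\mathrm{pt}=X^{(0)}\subseteq X^{(1)}\subseteq\cdots\subseteq X^{(N)}=X_\tau$ be the tower in which $X^{(p)}$ is obtained from $X^{(p-1)}$ by attaching the disc bundle $D\tilde\xi_{\theta_p}$ along its sphere bundle $S\tilde\xi_{\theta_p}$. Each inclusion $X^{(p-1)}\hookrightarrow X^{(p)}$ is then a cofibration whose cofibre is the Thom space $T\tilde\xi_{\theta_p}$, so the long exact sequence of the pair gives $\dim_{\Z_2}H^n(X^{(p)})\le\dim_{\Z_2}\tilde H^n(T\tilde\xi_{\theta_p})+\dim_{\Z_2}H^n(X^{(p-1)})$ for all $n$; summing over $p$ yields $\dim_{\Z_2}H^n(X_\tau)\le\sum_{p=1}^N\dim_{\Z_2}\tilde H^n(T\tilde\xi_{\theta_p})$. (Equivalently one may invoke the associated spectral sequence of the filtration, as indicated in the text; the two formulations give the same bound.) It therefore suffices to show, for a single multi-singularity $\theta$, that the dimensions of $\tilde H^*(T\tilde\xi_\theta)$ grow no faster than those of a finitely generated $\Z_2$-algebra.

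Next I would peel $T\tilde\xi_\theta$ apart using (\ref{Thomspace}). For $\theta=m_1\eta_1+\cdots+m_s\eta_s$ we have $T\tilde\xi_\theta=\bigwedge_{i=1}^s D_{m_i}T\tilde\xi_i$, so by the Künneth theorem over the field $\Z_2$, $\tilde H^*(T\tilde\xi_\theta)\cong\bigotimes_{i=1}^s\tilde H^*(D_{m_i}T\tilde\xi_i)$, and the Poincaré series of a tensor product is the product of the Poincaré series. Since a tensor product of finitely many finitely generated graded $\Z_2$-algebras is again finitely generated, the problem reduces to a single factor $\tilde H^*(D_rT\xi)$, where $\xi=\tilde\xi_i$ is a real vector bundle over $BG$ with $G=G_i$ a compact Lie group and $r=m_i$. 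Here I would invoke (\ref{extpower}): $D_rT\xi=T(\mathcal{S}_r\xi)$ is the Thom space of the $r$-th extended power $\mathcal{S}_r\xi=ES_r\times_{S_r}(\xi\times\cdots\times\xi)$, a real vector bundle over $ES_r\times_{S_r}(BG)^r$. The base here is a model for the classifying space $B(G\wr S_r)$ of the wreath product $G\wr S_r=G^r\rtimes S_r$; and if $G$ is realized as a closed subgroup of some $O(m)$, then $G\wr S_r$ is a closed subgroup of $O(mr)$, hence is itself a compact Lie group. Applying the mod $2$ Thom isomorphism to $\mathcal{S}_r\xi$ gives $\tilde H^*(D_rT\xi;\Z_2)\cong H^{*-r\cdot\mathrm{rk}(\xi)}(B(G\wr S_r);\Z_2)$ as graded $\Z_2$-vector spaces.

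The proof is then completed by the classical finiteness theorem (Venkov, Quillen) that $H^*(BK;\Z_2)$ is a finitely generated $\Z_2$-algebra for every compact Lie group $K$ --- equivalently, a finitely generated module over the polynomial algebra $H^*(BO(M);\Z_2)$ for any embedding $K\hookrightarrow O(M)$. Applying this with $K=G_i\wr S_{m_i}$, the Hilbert function of $H^*(B(G_i\wr S_{m_i});\Z_2)$ is bounded by a polynomial in the degree, hence so is that of $\tilde H^*(D_{m_i}T\tilde\xi_i)$, and running back through the two reductions above, so is the Hilbert function of $\tilde H^*(T\tilde\xi_\theta)$ and finally of $H^*(X_\tau)$; indeed $\dim_{\Z_2}H^n(X_\tau)$ is dominated, up to degree shifts, by the Hilbert function of the finitely generated $\Z_2$-algebra $\bigotimes_{\theta\in\tau}\bigotimes_i H^*(B(G_i\wr S_{m_i});\Z_2)$.

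I expect the step requiring the most care to be the geometric identification in the second paragraph: recognising the $r$-adic construction $D_{m_i}T\tilde\xi_i$ as the Thom space of an honest vector bundle over the classifying space of the compact Lie group $G_i\wr S_{m_i}$, so that a Noetherianity result for classifying spaces can be brought to bear. An alternative that sidesteps the wreath-product picture is to apply the Evens-Venkov finite-generation theorem directly to the equivariant cohomology $H^*_{S_{m_i}}\big((T\tilde\xi_i)^{\times m_i}\big)$ as a module over the finitely generated algebra $H^*(BS_{m_i};\Z_2)$, combined with a cofibre-sequence comparison between $(T\tilde\xi_i)^{\wedge m_i}$ and $(T\tilde\xi_i)^{\times m_i}$; but the wreath-product route seems cleaner. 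The filtration bookkeeping of the first paragraph and the Künneth step are routine.
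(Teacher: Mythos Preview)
Your proof is correct and follows essentially the same route as the paper's: reduce via the filtration (equivalently the spectral sequence) to the Thom spaces $T\tilde\xi_\theta$, split these by K\"unneth into the factors $D_{m_i}T\tilde\xi_i$, apply the Thom isomorphism to identify the cohomology with $H^*(B(G_i\wr S_{m_i}))$ up to a degree shift, and then invoke Venkov's theorem on the finite generation of $H^*(BK)$ for compact Lie $K$. The only cosmetic difference is in the bookkeeping of the final bound (the paper packages everything into $\bigotimes_i\bigoplus_{m_i\le\alpha_i}\tilde H^*(D_{m_i}T\tilde\xi_i)$, whereas you sum over $\theta$ and then dominate by a tensor product), but the substance is identical.
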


 Assuming Proposition \ref{Hilbert} for a moment, we now give the proof of Theorem \ref{taumaps}.

  \begin{proof}[Proof of Theorem \ref{taumaps}]
  The cohomology of the Eilenberg-Mac Lane space $K(\Z_2,k)$ (where $k >1$) is a polynomial ring on infinitely many generators, by Serre's Theorem \cite{Ser53}. Hence by Proposition \ref{Hilbert}, the finite group $H^N(K(\Z_2,k))$ has more elements than the finite group $H^N(X_\tau)$, for some $N$ sufficiently large. It follows that there does not exist a pointed map $F\co K(\Z_2,k)\to X_\tau$ with $F^*(U_\tau)=\iota_k\in H^k(K(\Z_2,k))$, where $U_\tau\in H^k(X_\tau)$ is the Thom class. Indeed, such a map would have $U_\tau\circ F\co K(\Z_2,k)\to K(\Z_2,k)$ homotopic to the identity, which would imply injectivity of the induced map $U_\tau^*\co H^N(K(\Z_2,k))\to H^N(X_\tau)$.

   By embedding a suitable skeleton of $K(\Z_2,k)$ in some Euclidean space, and taking the boundary of a regular neighbourhood (as in the proof of Theorem \ref{mod2}), we obtain a closed manifold $N_k$ and a cohomology class $x_k\in H^{k}(N_k;\Z_2)$ which is not a pullback of the Thom class $U_\tau$, and hence by Proposition \ref{Wellstau} cannot be realized by a $\tau$-map.
  \end{proof}

 \begin{proof}[Proof of Proposition \ref{Hilbert}]
The cohomology groups of $X_\tau$ have dimensions bounded above by those of the $E_2$-term of the spectral sequence converging to $H^*(X_\tau)$. From the construction of $X_\tau$ (attaching successive disc bundles of the vector bundles associated to the multi-singularities in $\tau$, along maps defined on their boundary sphere bundles) it follows that the sum of the entries in the $E_2$-term is the sum of the cohomologies of the Thom spaces associated to multi-singularities in $\tau$,
\begin{equation}
\bigoplus_{\theta\in\tau}\tilde{H}^*(T\tilde{\xi}_\theta).
\end{equation}
Since $\tau$ is finite, there are finitely many natural numbers $\alpha_1,\ldots ,\alpha_s$ and local singularities $\eta_1,\ldots ,\eta_s$ such that every multi-singularity $\theta\in\tau$ can be written in the form $\theta = m_1\eta_1 + \cdots + m_s\eta_s$, where the $m_i$ are non-negative integers with $m_i\leq \alpha_i$. By the description (\ref{Thomspace}) and the K\" unneth Theorem, it follows that the dimensions of the $E_2$-term are bounded above by those of
\begin{equation}
\bigotimes_{i=1}^s \bigoplus_{m_i=1}^{\alpha_i} \tilde{H}^*(D_{m_i}T\tilde{\xi}_i).
\end{equation}
Since a finite tensor product or finite direct sum of finitely generated algebras is again finitely generated, the proof will be complete if we can show that the dimensions of $\tilde{H}^*(D_{r} T\tilde{\xi}_i)$ grow not faster than those of a finitely generated algebra, for any natural number $r$ and $i=1,\ldots , s$.

Recall that if $\xi \to B$ is a vector bundle, then $D_rT\xi$ is the Thom space of the $r$-th extended power $\mathcal{S}_r(\xi)$ of $\xi$ (defined at (\ref{extpower}) above). Hence there is a Thom isomorphism
\begin{equation}
\tilde{H}^*(D_{r}T\tilde{\xi}_i) \cong   H^{*-d}\left(\mathcal{S}_r(BG_i)\right),
\end{equation}
where $d = \dim\mathcal{S}_r(\tilde{\xi}_i)$ and $G_i$ is the maximal compact subgroup of the local singularity $\eta_i$.

The space $\mathcal{S}_r(BG_i)$ is the classifying space of the wreath product $S_r\int G_i$, which is itself a compact Lie group. To finish the proof, we quote a result of Venkov \cite{Ven} (see also Quillen \cite[Corollary 2.2]{Q}), which states that if $G$ is a compact Lie group, then the cohomology algebra $H^*(BG)$ is finitely generated. Thus it follows that (up to a shift in degrees) the growth of the cohomology groups $\tilde{H}^*(D_{r} T\tilde{\xi}_i)$ is as claimed.
\end{proof}

\end{document}